\documentclass{article}

\usepackage{amsmath}
\usepackage{amssymb}
\usepackage{amsthm}
\usepackage{graphicx}
\usepackage{texdraw}
\textwidth150mm \textheight22cm \topmargin0mm \oddsidemargin4mm
\parskip5pt

\newtheoremstyle{plain2}{\topsep}{\topsep}%
     {\itshape}
     {}
     {\bfseries}
     {.}
     {.5em}
     {\thmnumber{(#2)}\thmname{ #1}\thmnote{ #3}}

\theoremstyle{plain2}
\newtheorem{teo}{Theorem}[section]
\newtheorem{prop}[teo]{Proposition}
\newtheorem{coro}[teo]{Corollary}
\newtheorem{lemma}[teo]{Lemma}

\newtheoremstyle{definition2}{\topsep}{\topsep}%
     {}
     {}
     {\bfseries}
     {.}
     {.5em}
     {\thmnumber{(#2)}\thmname{ #1}\thmnote{ #3}}

\theoremstyle{definition2}
\newtheorem{example}[teo]{Example}
\newtheorem{rem}[teo]{Remark}

\def\N{\mathbb{N}}

\def\R{\mathbb{R}}
\def\C{\mathbb{C}}

\def\a{\alpha}
\def\b{\beta}

\def\d{\delta}
\def\ep{\varepsilon}
\def\eps{\varepsilon}
\def\f{\varphi}
\def\l{\lambda}

\def\s{\sigma}

\def\D{\Delta}
\def\L{\Lambda}

\def\Neh{\mathcal{N}}
\def\Kah{\mathcal{K}}
\def\Xh{\mathcal{X}}

\def\supp{\mathrm{supp\,}}

\def\spc{H^1_{r}(\R^N)}

\title{\sc Multipulse phases in $k$--mixtures of Bose--Einstein
condensates
\footnote{Work partially supported by MIUR, Project ``Metodi
Variazionali ed Equazioni Differenziali Non Lineari'' }}

\author{%
Susanna Terracini\\
Dipartimento di Matematica e Applicazioni\\ Universit\`a degli Studi
di Milano-Bicocca\\ Via Bicocca degli Arcimboldi, 8\\ 20126 Milano,
Italy\\\texttt{susanna.terracini@unimib.it}  \and Gianmaria Verzini\\
Dipartimento di Matematica \\ Politecnico di Milano\\ Piazza
Leonardo da Vinci, 32\\  20133 Milano, Italy\\
\texttt{gianmaria.verzini@polimi.it}}

\date{}

\begin{document}

\maketitle

\begin{abstract}
For the system
\[
-\D U_i+ U_i=U_i^3-\b U_i\sum_{j\neq i}U_j^2,\qquad i=1,\dots,k,
\]
(with $k\geq3$) we prove the existence, for $\b$ large, of positive
radial solutions on $\R^N$. We show that, as $\b\to+\infty$, the
profile of each component $U_i$ separates, in many pulses, from the
others. Moreover, we can prescribe the location of such pulses in
terms of the oscillations of the changing--sign solutions of the
scalar equation $-\D W+ W=W^3$. Within an Hartree--Fock
approximation, this provides a theoretical indication of phase
separation into many nodal domains for the $k$--mixtures of
Bose--Einstein condensates.
\end{abstract}

\section{Introduction}

In this paper we seek radial solutions to the  system of elliptic equations
\begin{equation}\label{eq:sys}
\left\{
 \begin{array}{l}
  -\D U_i+ U_i=U_i^3-\b U_i\sum_{j\neq i}U_j^2,\qquad i=1,\dots,k\smallskip\\
  U_i\in H^1(\R^N),\quad U_i>0,
 \end{array}
\right.
\end{equation}
with $N=2,3$, $k\geq 3$, and $\b$ (positive and) large, in
connection with the changing--sign solutions of the scalar equation
\begin{equation}\label{eq:sing}
-\D W+W=W^3,\qquad W\in H^1(\R^N).
\end{equation}
It is well known (see, for instance, \cite{s,jk}) that this equation
admits infinitely many nodal solutions. More precisely, following
Bartsch and Willem \cite{bw}, for any $h\in\N$ equation
\eqref{eq:sing} possesses radial solutions with exactly $h-1$
changes of sign, that
is $h$ nodal components (``bumps''), with a variational characterization.\\
\begin{center}
\begin{texdraw}

\drawdim cm  \setunitscale 1.0

\linewd 0.02 \setgray 0 \lpatt()

\move (-0.6 0) \arrowheadtype t:V \arrowheadsize l:0.3 w:0.15 \avec
(13 0)

\move (0 -2) \avec (0 3) \arrowheadtype t:V

\linewd 0.04

\setgray 0.5 \lpatt () \move (0 2.5)
\clvec(1.1 2.5)(1.1 -1.8)(2.2 -1.8)
\clvec(3.3 -1.8)(3.4 1.3)(4.5 1.3)
\clvec (5.6 1.3)(5.7 -0.9)(7 -0.9)
\clvec(8.5 -0.9)(8.5 0.6)(9.5 0.6)
\clvec(10.5 0.6)(10 0.2)(12.5 0.1)
\textref h:C v:C \htext (12.6 -0.4) {$|x|$} \htext (4.2 1.6) {$W$}
\htext (12 2.5) {$h=5$}
\end{texdraw}
\end{center}
In the recent paper \cite{ww}, Wei and Weth have shown that, in the
case of $k=2$ components, there are solutions $(U_1,U_2)$ such that
the difference $U_1-U_2$, for large values of $\b$, approaches some
sign--changing solution $W$ of \eqref{eq:sing}. Hence, one can
prescribe the limit shape of $U_1$ and $U_2$ as $W^+$ and $W^-$:
this means that each $U_i$ can be seen as the sum of pulses, each
converging to one of the bumps of $|W|$.
\begin{center}
\begin{texdraw}

\drawdim cm  \setunitscale 1.0

\linewd 0.02 \setgray 0 \lpatt()

\move (-0.6 0) \arrowheadtype t:V \arrowheadsize l:0.3 w:0.15 \avec
(13 0)

\move (0 -0.6) \avec (0 3) \arrowheadtype t:V
\linewd 0.02 \setgray 0 \lpatt()

\move (5.35 2.4) \arrowheadtype t:V \arrowheadsize l:0.2 w:0.1 \avec
(5 1.6)

\move (7.75 2) \arrowheadtype t:V \arrowheadsize l:0.2 w:0.1 \avec
(7.6 1.2)
\linewd 0.04 \setgray 0.7 \lpatt () \move (0 2.6)
\clvec(1.1 2.6)(0.6 0.2)(2.2 0.2)
\clvec(4 0.2)(3.4 1.6)(4.5 1.6)
\clvec (5.6 1.6)(5.7 0.15)(7 0.15)
\clvec(9 0.15)(8.5 0.8)(9.5 0.8)
\clvec(10.5 0.8)(10 0.25)(12.5 0.15)
\setgray 0.3 \lpatt (0.1 0.1) \move (0 0.1)
\clvec(1.9 0.1)(1.1 2)(2.2 2)
\clvec(3.3 2)(2.6 0.15)(4.1 0.15)
\clvec (6.3 0.15)(5.7 1)(7 1)
\clvec(8.5 1)(8.5 0.26)(9.5 0.18)
\clvec(10.5 0.1)(10 0.2)(12.5 0.05)
\textref h:C v:C \htext (12.6 -0.4) {$|x|$} \htext (5.5 2.6)
{$U_1\sim W^+$} \htext (7.9 2.2) {$U_2\sim W^-$}

\end{texdraw}
\end{center}
In the present paper we extend this result to the case of an
arbitrary number of components $k\geq3$, proving the existence of
solutions to \eqref{eq:sys} with the property that, for $\b$ large,
each component $U_i$ is near the sum of some non--consecutive bumps
of $|W|$ (see Theorems \ref{teo:main} and \ref{teo:main2}).
\begin{center}
\begin{texdraw}

\drawdim cm  \setunitscale 1.0

\linewd 0.02 \setgray 0 \lpatt()

\move (-0.6 0) \arrowheadtype t:V \arrowheadsize l:0.3 w:0.15 \avec
(13 0)

\move (0 -0.6) \avec (0 3) \arrowheadtype t:V

\linewd 0.04
\setgray 0.85 \lpatt () \move (0 2.4)
\clvec(1.3 2.4)(0.6 0.15)(2.2 0.15)
\clvec(4 0.15)(3.4 1.7)(4.5 1.7)
\clvec (5.6 1.7)(5.7 0.35)(7.2 0.25)
\clvec(7.7 0.15)(10 0.05)(12.5 0.04)
\setgray 0.15 \lpatt (0.1 0.1) \move (0 0.2)
\clvec(1.9 0.2)(1.1 2)(2.2 2)
\clvec(3.8 2)(2.6 0.15)(6.1 0.15)
\clvec (9.3 0.15)(8 0.8)(9.5 0.8)
\clvec(10.5 0.8)(10 0.25)(12.5 0.15)
\setgray 0.5 \lpatt (0.2 0.1) \move (0 0.1)
\clvec(3.9 0.1)(4.5 0.1)(5.5 0.3)
\clvec(6 0.4)(6 1)(7 1)
\clvec(8.5 1)(8.5 0.26)(9.5 0.18)
\clvec(10.5 0.1)(10 0.2)(12.5 0.09)
\textref h:C v:C \htext (12.6 -0.4) {$|x|$} \htext (1.5 2.7)
{$U_1\sim w_1+w_3$} \htext (4.5 2.2) {$U_2\sim w_2+w_5$} \htext (8.4
1.9) {$U_3\sim w_4$}
\linewd 0.02 \setgray 0 \lpatt()

\move (1 2.5) \arrowheadtype t:V \arrowheadsize l:0.2 w:0.1 \avec
(0.7 2.2)

\move (4 2) \arrowheadtype t:V \arrowheadsize l:0.2 w:0.1 \avec (3.2
1.8)

\move (8.3 1.7) \arrowheadtype t:V \arrowheadsize l:0.2 w:0.1 \avec
(8 1)

\end{texdraw}
\end{center}
Furthermore, we can prescribe the correspondence between such bumps
of $|W|$ and the index $i$ of the component $U_i$ (see Example
\ref{example}). This, compared with the case $k=2$, provides a much
richer structure of the solution set for \eqref{eq:sys}. This goal
will be achieved by a suitable construction inspired by the extended
Nehari method (see \cite{neh}) developed in \cite{ctv2}.

System \eqref{eq:sys} arises in the study of solitary wave solutions
of systems of $k\geq3$ coupled nonlinear Schr\"odinger equations,
known in the literature as Gross--Pitaevskii equations:
\[
\left\{
 \begin{array}{l}
  \displaystyle -\imath \partial_t(\phi_i)=\Delta \phi_i-V_i(x)\phi_i+ \mu_i|\phi_i|^2\phi_i-
  \sum_{j\neq i}\b_{ij}|\phi_j|^2\phi_i,\qquad i=1,\dots,k\smallskip\\
  \phi_i \in H^1(\R^N;\C),\qquad N=1,2,3.
 \end{array}
\right.
\]
This system has been proposed as a mathematical model for
multispecies Bose--Einstein condensation in $k$ different hyperfine
spin states (see \cite{clll} and references therein); such a
condensation has been experimentally observed in the triplet states
(see \cite{nature}). Here the complex valued functions $\phi_i$'s
are the wave functions of the $i$--th condensate, the functions
$V_i$'s represent the trapping magnetic potentials, and the positive
constants $\mu_i$'s and $\b_{ij}$'s are the intraspecies and the
interspecies scattering lengths, respectively. With this choice the
interactions between like particles are attractive, while the
interactions between the unlike ones are repulsive; we shall assume
that $\b_{ij}=\b_{ji}$, which gives the system a gradient structure.
To obtain solitary wave solutions we set
\[
\phi_i(t,x)=e^{-\imath\lambda_i t}U_i(x),
\]
obtaining that the real functions $U_i$'s satisfy
\begin{equation}\label{eq:sys_comp}
\left\{
 \begin{array}{l}
  \displaystyle -\Delta U_i+\left[V_i(x)+\lambda_i\right]U_i= \mu_iU_i^3-
  \sum_{j\neq i}\b_{ij}U_j^2U_i,\qquad i=1,\dots,k\smallskip\\
  U_i \in H^1(\R^N).
 \end{array}
\right.
\end{equation}
For the sake of simplicity we assume $V_i(x)\equiv0$,
$\lambda_i=\mu_i=1$ and $\b_{ij}=\b$, for every $i$ and $j$, and
$N=2,3$, even though our method works also in more general cases,
see Remark \ref{rem:finale} at the end of the paper. With this
choice, system \eqref{eq:sys_comp} becomes system \eqref{eq:sys}.

For a fixed $k$, as the interspecific competition goes to infinity,
the wave amplitudes $U_i$'s segregate, that is, their supports tend
to be disjoint. This phenomenon, called ``phase separation'', has
been studied, starting from \cite{ctv,ctv2}, in the case of
$\mu_i>0$ and in \cite{clll} in the case $\mu_i<0$, for least energy
solutions in non necessarily symmetric bounded domains. Of course,
the number of connected domains of segregation is at least the
number of different phases surviving in the limit. For the minimal
solutions, the limiting states have \emph{connected}
supports\footnote{This is rigorously proven in \cite{ctv2}, while it
results from numerical evidence in \cite{clll}.}. This is not
necessarily the case for solutions which are not characterized as
ground states. This is indeed  what we show in the present paper,
proving the existence of solutions converging to limiting states
which supports have a large number of connected components. In this
way we obtain a large number of connected domains of segregation
with a few phases. Taking the limiting supports as unknown, this can
be seen as a free boundary problem. The local analysis of the
interfaces and the asymptotic analysis, as the interspecific
scattering length grows to infinity has been carried in \cite{ctv2}
for the minimal solutions.

In the recent literature, systems of type \eqref{eq:sys} have been
the object of an intensive research also in different ranges of the
interaction parameters, for their possible applications to a number
of other physical models, such as the study of incoherent solutions
in nonlinear optics.  We refer the reader to the recent papers
\cite{ac,mmp,dww,lw2,ww2} mainly dealing with systems of two
equations. For the general $k$--systems we refer to \cite{lw,si} and
the references therein.




\section{Preliminaries and main results}\label{sec:basic}
In the absence of a magnetic trapping potential we shall work in the
Sobolev space of radial functions $\spc$, endowed with the standard
norm $\|U\|^2=\int_{\R^N}|\nabla U_i|^2+U_i^2\,dx$; it is well known
that such functions are continuous everywhere but the origin, thus
we are allowed to evaluate them pointwise. As $N=2,3$ implies that
$p=4$ is a subcritical exponent, the (compact) embedding of $\spc$
in $L^4(\R^N)$ (see \cite{s}) will be available:
\begin{lemma}[(Sobolev--Strauss)]\label{lem:strauss}
If $U\in\spc$ then $\int_{\R^N}U^4\,dx\leq C_S^4 \|U\|^4$, and the
immersion $\spc\hookrightarrow L^4(\R^N)$ is compact for $N=2,3$.
\end{lemma}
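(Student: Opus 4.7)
The statement has two parts: a Sobolev-type inequality and a compactness assertion. Both are classical, and the key tool is the Strauss pointwise decay estimate for radial $H^1$ functions, combined with the standard Rellich--Kondrachov theorem on bounded domains.

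For the inequality $\int_{\R^N} U^4\,dx\le C_S^4\|U\|^4$, my plan is simply to invoke the Sobolev embedding $H^1(\R^N)\hookrightarrow L^p(\R^N)$, which holds continuously for all $p\in[2,2^*]$. Since $N=2,3$ gives $2^*=\infty$ (resp. $6$), the exponent $p=4$ is always subcritical (and $\ge 2$), so the embedding applies in particular to radial functions, giving the claimed inequality with $C_S$ the corresponding Sobolev constant. No radiality is needed for this part.

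For the compactness, the plan is a standard diagonal/truncation argument. First I would recall (or prove in one line by integrating $\frac{d}{dr}(r^{N-1}U(r)^2)$ and using Cauchy--Schwarz) the Strauss radial decay lemma: there exists $C>0$ such that for every $U\in\spc$ and every $|x|\ge 1$,
\[
|U(x)|^2\le C\,|x|^{-(N-1)}\,\|U\|^2 .
\]
Given a bounded sequence $\{U_n\}\subset\spc$, extract by Rellich a subsequence (not relabelled) converging in $L^4(B_R)$ for every $R>0$, via a diagonal extraction over $R\in\N$. To upgrade this to convergence in $L^4(\R^N)$, I estimate the tail: using Strauss,
\[
\int_{|x|\ge R}|U_n|^4\,dx\le \Bigl(\sup_{|x|\ge R}|U_n(x)|^2\Bigr)\int_{|x|\ge R}|U_n|^2\,dx\le C\,R^{-(N-1)}\|U_n\|^4,
\]
which tends to $0$ as $R\to\infty$, uniformly in $n$ since $\{U_n\}$ is bounded. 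Combining uniform smallness of the tails with $L^4$-convergence on each $B_R$ yields a Cauchy subsequence in $L^4(\R^N)$, proving compactness.

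The main (and only) point where care is needed is ensuring that the tail estimate is genuinely uniform in $n$; this is where radiality enters essentially, since without it the Strauss decay fails and mass can escape to infinity, causing the embedding in $L^4(\R^N)$ to be merely continuous but not compact. Everything else (Sobolev embedding, Rellich on balls, diagonal extraction) is routine.
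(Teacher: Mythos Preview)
Your proposal is correct and follows the standard textbook proof of the Strauss compactness lemma. The paper, however, does not prove this statement at all: it simply records it as a known fact and cites Strauss \cite{s}. So there is nothing to compare beyond noting that you have supplied the classical argument (Sobolev embedding for the inequality; radial pointwise decay plus Rellich on balls and a uniform tail estimate for compactness) where the paper is content with a reference.
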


 We search for solutions of
\eqref{eq:sys} as critical points of the related energy functional
\[
J_\b(U_1,\dots,U_k)=\sum_{i=1}^k\left[\frac12\|U_i\|^2-\frac14\int_{\R^N}
U_i^4\,dx\right]+\frac\b4\sum_{{i,j=1 \atop i\neq
j}}^k\int_{\R^N}U_i^2U_j^2\,dx
\]
(we will always omit the dependence on $\b$ when no confusion
arises). In the same way we associate with equation \eqref{eq:sing} the corresponding functional
\[
J^*(W)=\frac12\|W\|^2-\frac14\int_{\R^N} W^4\,dx.
\]
Let $h\in\N$ be fixed. We introduce the set of the nodal components
of radial functions having (at least) $h-1$ ordered zeroes as
\[
\Xh^*=\left\{(w_1,\dots w_h)\in\left(\spc\right)^h:\,
 \begin{array}{cl}
  \text{for every }l=1,\dots,h\text{ it holds } w_l\geq0,\,w_l\not\equiv0 \text{ and}\smallskip\\
  w_l(x_0)>0 \implies w_p(x)=0 \left\{
  \begin{array}{l}
   \forall|x|\geq|x_0|\text{ if }p<l\smallskip\\
   \forall|x|\leq|x_0|\text{ if }p>l.
  \end{array}
  \right.
\end{array}
\right\}.
\]
We will often write $W=(w_1,\dots w_h)$. By definition, if
$W\in\Xh^*$, then $w_l\cdot w_p=0$ a.e. when $l\neq p$. More
precisely, the sets $\{w_l>0\}$ are contained in disjoint
annuli\footnote{Here and in the following by annuli we mean also
balls or exteriors of balls.} and, for $l<p$, the annulus containing
$\{w_l>0\}$ is closer to the origin than the one containing
$\{w_p>0\}$. As a consequence, we have
$J^*(\sum_lw_l)=J^*(\sum_l(-1)^lw_l)=\sum_lJ^*(w_l)$.

We are interested in solutions of \eqref{eq:sing} with $h$ nodal
regions. The \emph{Nehari manifold} related to this problem is
defined as
\[
\Neh^*=\left\{W\in \Xh^*:\,J^*(w_l)=\sup_{\l>0}J^*(\l
w_l)\right\}=\left\{W\in \Xh^*:\,\|w_l\|^2=\int_{\R^N}
w_l^4\,dx\right\}.
\]
As a matter of fact one has
\begin{prop}\label{prop:sing}
Let
\[
c_\infty=\inf_{W\in\Neh^*}J^*(W)=\inf_{W\in\Xh^*}\sup_{\l_l>0}J^*\left(\sum_{l=1}^h\l_lw_l\right).
\]
Then the set
\[
\Kah=\left\{W\in\Neh^*:\,J^*(W)=c_\infty\right\}
\]
is non empty and compact, and, for every $W\in\Kah$, the functions
\[
\pm\sum_{l=1}^h(-1)^hw_l\text{ solve \eqref{eq:sing}.}\footnote{As a
consequence $\supp w_l$ is an annulus for every $l$, and $\supp
W=\R^N$.}
\]
Moreover there exist two constants $0<C_1<C_2$ such that, for
every $W\in\Kah$ and for every $l$ it holds
\[
C^2_1\leq\|w_l\|^2=\int_{\R^N}w_l^4\,dx\leq C^2_2.
\]
\end{prop}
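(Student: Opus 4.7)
The plan is to carry out the direct method of the calculus of variations on $\Neh^*$, exploiting Lemma \ref{lem:strauss} and the ordered, pairwise disjoint support structure built into $\Xh^*$. First I would verify the equivalence of the two expressions for $c_\infty$: since the $w_l$ have disjoint supports,
\[
J^*\left(\sum_{l=1}^h\l_l w_l\right) = \sum_{l=1}^h\left[\frac{\l_l^2}{2}\|w_l\|^2 - \frac{\l_l^4}{4}\int_{\R^N}w_l^4\,dx\right],
\]
and this decoupled sum attains its unique maximum at $\bl_l^2 = \|w_l\|^2/\int_{\R^N}w_l^4\,dx$, so the rescaled tuple $(\bl_1 w_1,\dots,\bl_h w_h)$ lies in $\Neh^*$ and realizes the same value of $J^*$. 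Finiteness of $c_\infty$ follows from a trial configuration of $h$ bumps supported in disjoint concentric annuli, while the Nehari identity $\|w_l\|^2 = \int_{\R^N} w_l^4\,dx \leq C_S^4\|w_l\|^4$ gives $\|w_l\|\geq C_S^{-2}$ and hence $c_\infty \geq h/(4C_S^4) > 0$.

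To prove existence, I would take a minimizing sequence $W^n\in\Neh^*$. The identity $J^*(W^n) = \tfrac14\sum_l\|w_l^n\|^2$ bounds each component in $\spc$, so up to a subsequence $w_l^n \rightharpoonup w_l$ weakly in $\spc$ and strongly in $L^4(\R^N)$. The lower bound $\|w_l^n\|\geq C_S^{-2}$ transfers to $\int_{\R^N} w_l^4\,dx \geq C_S^{-4}$ in the limit, so no component vanishes; the annular ordering defining $\Xh^*$ survives pointwise a.e.\ convergence, so $W\in\Xh^*$. Weak lower semicontinuity together with strong $L^4$ convergence yields $\|w_l\|^2\leq\int_{\R^N} w_l^4\,dx$, so there exist $\bl_l\in(0,1]$ with $(\bl_l w_l)\in\Neh^*$ and
\[
c_\infty \leq J^*(\bl_1 w_1,\dots,\bl_h w_h) = \tfrac14\sum_l\bl_l^2\|w_l\|^2 \leq \tfrac14\sum_l\|w_l\|^2 \leq \liminf_n J^*(W^n) = c_\infty,
\]
forcing equalities throughout, $\bl_l\equiv 1$, strong convergence in $\spc$, and $W\in\Kah$.

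To identify $W^\star := \sum_l(-1)^l w_l$ as a solution of \eqref{eq:sing}, I would first carry out the standard Nehari perturbation inside each annulus: for $\phi_l\in\spc$ compactly supported in the open annulus $\{w_l>0\}$, a smooth rescaling $\bl_l(t)$ restoring the $l$-th Nehari identity, combined with the minimality of $c_\infty$, yields by differentiation at $t=0$ the Euler--Lagrange equation $-\D w_l + w_l = w_l^3$ in the interior of $\supp w_l$. Matching of the radial derivatives across each interface sphere then follows by admitting variations that shift the interface radii while simultaneously rescaling the two neighbouring components to preserve their Nehari identities; minimality forces the boundary contributions to cancel, so $W^\star$ is a $C^1$ radial weak (hence classical) solution of $-\D W + W = W^3$ on $\R^N$, with $\supp W^\star = \R^N$.

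The quantitative bounds $C_1^2 := C_S^{-4} \leq \|w_l\|^2 \leq 4c_\infty =: C_2^2$ then follow from the Nehari identity combined with $J^*(W) = \tfrac14\sum_l\|w_l\|^2 = c_\infty$, and compactness of $\Kah$ is obtained by rerunning the minimizing-sequence argument on an arbitrary sequence in $\Kah$ (which is automatically minimizing). The main obstacle I anticipate is the interface-matching step: it requires perturbing both the profiles $w_l$ and the unknown interface radii while maintaining all $h$ Nehari identities, and then arguing that only matched radial derivatives are compatible with the minimum $c_\infty$. This is where the ordered-supports structure of $\Xh^*$ must be exploited most delicately, so that the constrained critical point of $J^*$ on $\Neh^*$ becomes a genuine critical point of $J^*$ on $\spc$.
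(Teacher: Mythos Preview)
The paper does not actually prove this proposition: it simply states that the result is ``very well known in the literature'' and refers to \cite{bw}. Your outline is precisely the standard direct-method/Nehari argument underlying that reference, and the steps you sketch (decoupling on $\Neh^*$, lower bound via Lemma~\ref{lem:strauss}, strong $L^4$ compactness to rule out vanishing, and interface matching by varying the nodal radii while rescaling back onto $\Neh^*$) are the right ones; the interface step you flag as the main obstacle is indeed where the care is needed, but the strategy you describe is exactly how it is handled in the radial setting.
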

For the proof of this result, very well known in the literature, we
refer to \cite{bw}.

Now, let us consider system \eqref{eq:sys}. Roughly speaking, we
want to construct solutions of \eqref{eq:sys} in the following way:
each $U_i>0$ is the sum of pulses $u_{im}$, where each $u_{im}$ is
near some $w_l$ for an appropriate $W\in\Kah$. Maybe an example will
make the situation more clear.

\begin{example}\label{example}
Let $h=5$ and $k=3$. A possible setting is to search for solutions
$U_1=u_{11}+u_{12}$, $U_2=u_{21}+u_{22}$, $U_3=u_{31}$, in such a
way that, for some $W\in\Kah$, (for instance) $u_{11}$ is near
$w_1$, $u_{21}$ is near $w_2$, $u_{12}$ is near $w_3$, $u_{31}$ is
near $w_4$, and $u_{22}$ is near $w_5$. The only rule we want to
respect is that two consecutive pulses $w_l$ and $w_{l+1}$ must
belong to different components $U_i$ and $U_j$ (see the last figure
in the introduction).
\end{example}

The general situation can be treated as follows. Let $h\geq k$ and
consider any surjective map
\[
\s:\{1,\dots,h\}\to\{1,\dots,k\}\quad\text{ such that
}\quad\s(l+1)\neq\s(l)\text{ for }l=1,\dots,h-1
\]
(a map that associates each pulse of an element of $\Kah$ to a
component $U_i$). The numbers $h_i=\#\s^{-1}(i)$ (the number of
pulses associated to the $i$--th component) are such that $h_i\geq1$
and $\sum_1^kh_i=h$. This means that we can (uniquely) define a
bijective map onto the set of double indexes
\[
\tilde\s:\{1,\dots,h\}\to\bigcup_{i=1}^k\{(i,m):\,m=1,\dots,h_i\}
\]
where the first index of $\tilde\s$ is given by $\s$, and the second
is increasing (when the first is fixed). In this setting, Example
\ref{example} can be read as $\tilde\s(1)=(1,1)$,
$\tilde\s(2)=(2,1)$, $\tilde\s(3)=(1,2)$, $\tilde\s(4)=(3,1)$,
$\tilde\s(5)=(2,2)$.\footnote{For easier notation we will write
$u_{im}$ instead of $u_{(i,m)}$; from now on we will use the letters
$i$, $j$ for the first index and the letters $m$, $n$ for the
second, while $l$ is reserved for the components of $W$.}.

According to the previous notation we define, for $\eps\leq1$,
\[
\Xh_\ep=\left\{(u_{11},\dots,u_{kh_k})\in\left(\spc\right)^h:\,\begin{array}{c}u_{im}\geq0
\text{ and there exists }W\in\Kah\text{ such that}\\
\displaystyle\sum_{i=1}^k \sum_{m=1}^{h_i}
\left\|u_{im}-w_{\tilde\s^{-1}(im)}\right\|^2<\ep^2\end{array}\right\},
\]
and $U_i=\sum_{m=1}^{h_i}u_{im}$. Sometimes we will use the distance
\[
d^2_{\tilde
\sigma}\left((u_{11},\dots,u_{kh_k}),W\right)=\sum_{i=1}^k
\sum_{m=1}^{h_i}\left\|u_{im}-w_{\tilde\s^{-1}(im)}\right\|^2
\]
\begin{rem}\label{lem:base}
Using Proposition \ref{prop:sing} it is easy to see that
\begin{enumerate}
 \item $\Xh_\ep$ is contained in an $\ep$--neighborhood of $\Kah$, in the
 sense of $d_{\tilde\sigma}$; $\Kah\subset\Xh_\ep$ (understanding the identification $w_l=u_{\tilde\s(im)}$);
 \item there exist constants $C_1$, $C_2$ not depending on $\b$ and
 $\ep<1$, such that $0<C_1\leq\|u_{im}\|\leq C_2$, $0<C_1^2\leq\int_{\R^N}u^4_{im}\,dx\leq
 C_2^2$
 \item $m\neq n$ implies $\int_{\R^N}\nabla u_{im} \cdot \nabla u_{in}<C\ep$,
 $\int_{\R^N}u_{im}u_{in}<C\ep$,
 $\int_{\R^N}U_i^2u_{im}u_{in}<C\ep$.
\end{enumerate}
\end{rem}
A first important result we want to give, that is underlying the
spirit of this whole paper, is the following: in the classical
Nehari's method described above, it is not necessary to perform the
min--max procedure on pulses with disjoined support, but we can
``mix up'', even tough not too much, the pulses with non adjacent
supports.
\begin{prop}\label{prop:nehari_mixed}
There exist $\ep_0\leq1$ such that for every $0<\ep<\ep_0$ the
following hold. If $(v_{11},\dots,v_{kh_k})\in\Xh_\ep$ is such that
\[
V_i\cdot V_j=0\quad\text{almost everywhere, for every }i,j,
\]
(but $v_{im}\cdot v_{in}$ is not necessarily null) then
\[
\sup_{\l_{im}>0}J^*\left(\sum_{i,m}\l_{im}v_{im}\right)\geq
c_\infty.
\]
\end{prop}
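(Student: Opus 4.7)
The plan is to construct a test element $(\tilde w_1,\dots,\tilde w_h)\in\Xh^*$ out of the given $(v_{11},\dots,v_{kh_k})$ and then invoke the variational characterization of $c_\infty$ given by Proposition \ref{prop:sing}.

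First I would exploit the assumption $V_i\cdot V_j=0$ for $i\neq j$ together with the closeness to $\Kah$: since each $v_{im}$ is an $H^1$--small perturbation of $w_{\tilde\s^{-1}(i,m)}$, for $\ep_0$ small enough each $V_i$ has radial support consisting of $h_i$ disjoint open annuli $S_{i,1}<\dots<S_{i,h_i}$, with $\supp w_{\tilde\s^{-1}(i,m)}\subset S_{i,m}$; moreover the global ordering of the family $\{S_{i,m}\}_{i,m}$ matches that of the $\supp w_l$'s induced by $\tilde\s$. This step relies on the uniform lower bounds in Remark \ref{lem:base} and on the continuity of the radial embedding.

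I would then set
\[
\tilde w_l := V_{\s(l)}\,\chi_{S_{\s(l),m(l)}}, \qquad l=1,\dots,h,
\]
where $(\s(l),m(l))=\tilde\s(l)$. These are non--negative radial functions with pairwise disjoint annular supports arranged in the correct radial order, and each non--trivial for $\ep$ small; hence $(\tilde w_1,\dots,\tilde w_h)\in\Xh^*$, and Proposition \ref{prop:sing} yields
\[
c_\infty \;\leq\; \sup_{\mu_l>0} J^*\Bigl(\sum_l \mu_l\,\tilde w_l\Bigr) \;=\; \sum_{l=1}^h \frac{\|\tilde w_l\|^4}{4\int \tilde w_l^4},
\]
the decoupling coming from the disjointness of the $\tilde w_l$'s.

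It then remains to establish
\[
\sup_{\l_{im}>0} J^*\Bigl(\sum_{i,m}\l_{im} v_{im}\Bigr) \;\geq\; \sup_{\mu_l>0} J^*\Bigl(\sum_l \mu_l\, \tilde w_l\Bigr),
\]
which I expect to be the main obstacle. Since $V_iV_j=0$ forces $J^*(\sum_{i,m}\l_{im}v_{im})=\sum_i J^*(\sum_m\l_{im}v_{im})$, and the analogous identity holds for the right--hand side thanks to the decomposition of $\supp V_i$ into the $S_{i,m}$'s, the problem reduces to $k$ independent per--component inequalities. For each $i$ I would take $\l_{im}^*$ equal to the Nehari--optimal scalar $\mu^*_{\tilde\s^{-1}(i,m)}$ associated with $\tilde w_{\tilde\s^{-1}(i,m)}$, and use the pointwise decomposition $v_{in}=\sum_m v_{in}\chi_{S_{i,m}}$: the diagonal parts $v_{im}\chi_{S_{i,m}}$ approximate $\tilde w_{\tilde\s^{-1}(i,m)}$, while the off--diagonal tails $v_{in}\chi_{S_{i,m}}$ ($n\neq m$) are $O(\ep)$ by item 3 of Remark \ref{lem:base}. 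Promoting this perturbative estimate to the required genuine inequality is the delicate point: I would attempt to carry it through by a further optimisation of $(\l_{im})$ around $(\l_{im}^*)$, exploiting that the Nehari quotient $\|u\|^4/(4\int u^4)$ is first--order stationary along scalar multiples of elements of $\Neh^*$ and that, since $c_\infty$ is attained as an infimum on $\Xh^*$, the second--order correction has a sign compatible with the desired inequality.
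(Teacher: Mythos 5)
There are two gaps here, one repairable and one fatal. The repairable one: $H^1$--closeness of $v_{im}$ to $w_{\tilde\s^{-1}(im)}$ gives no control whatsoever on supports, so your claim that $\supp V_i$ consists of exactly $h_i$ disjoint annuli, each containing the corresponding $\supp w_l$, is false in general (an $\ep$--small perturbation of $w_l$ can be supported anywhere, and $\{V_i>0\}$ can have arbitrarily many components). The correct move, which the paper makes, is not to decompose $\supp V_i$ but to partition all of $\R^N$ into $h$ ordered annuli $A_1,\dots,A_h$ with $\supp V_i\subset\bigcup_m\overline{A_{\tilde\s^{-1}(im)}}$; this is possible thanks to $V_iV_j=0$ and a connectedness argument, and it also guarantees that the restrictions exhaust each $V_i$.

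The fatal gap is precisely the step you flag as delicate, and it cannot be closed the way you suggest. Because you restrict first and scale afterwards, the competitors $\sum_l\mu_l\tilde w_l$ with $\tilde w_l=V_{\s(l)}\chi_{S_{\s(l),m(l)}}$ do \emph{not} lie in the family $\sum_{i,m}\l_{im}v_{im}$ over which your supremum is taken: for fixed $i$ the different annuli carry different $\mu$'s while each single $v_{in}$ spreads over all of them. So the inequality between the two suprema is a genuine claim, and your perturbative argument can only deliver $\sup_{\l}J^*(\sum\l_{im}v_{im})\geq c_\infty-C\ep$: the discrepancy between $\sum\l^*_{im}v_{im}$ and $\sum\mu^*_l\tilde w_l$ is of order $\ep$ with no sign control, and first--order stationarity of the Nehari quotient only annihilates variations along scalar multiples of a fixed function, not this discrepancy. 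Since the statement demands $\geq c_\infty$ exactly for every small $\ep$, an $O(\ep)$ loss is not acceptable. The paper avoids the comparison entirely by reversing the order of operations: it sets $\tilde v_{im}=\left.\left(\sum_n\l_{in}v_{in}\right)\right|_{A_{\tilde\s^{-1}(im)}}$ as a function of the unknown scalars, applies Miranda's theorem to find $\tilde\l_{in}$ making every piece satisfy the Nehari identity $\|\tilde v_{im}\|^2=\int_{\R^N}\tilde v_{im}^4\,dx$, and then uses that $\sum_{i,m}\tilde v_{im}=\sum_{i,m}\tilde\l_{im}v_{im}$ identically, so that $J^*$ at an admissible competitor equals $\sum_lJ^*(\tilde w_l)\geq c_\infty$ with no error term. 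You would need to import this scale--then--restrict construction, together with the fixed--point argument for the scalars, to make your proof work.
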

\begin{proof}
To prove the result, we will construct a $h$--tuple $(\tilde
w_1,\dots,\tilde w_h)\in\Neh^*$ such that, for a suitable choice of
the positive numbers $\tilde\lambda_{im}$'s, it holds
\[
J^*\left(\sum\tilde\l_{im}v_{im}\right)=J^*\left(\sum\tilde
w_{\tilde\sigma^{-1}(im)}\right).
\]
As a first step, we have to select the (disjointed) supports of such
$\tilde w_l$. To do that, using the definition of $\Xh_\ep$, we
choose a $W\in\Kah$ such that $\sum_{i,m}
\|v_{im}-w_{\tilde\s^{-1}(im)}\|^2<\ep^2$. When $\ep$ is small we
can find $h$ positive radii $r_1,\dots,r_h$ such that, for
$|x_l|=r_l$, $v_{\tilde\s(l)}(x_l)w_l(x_l)>0$.\footnote{Otherwise we
would obtain, for some $l$, $v_{\tilde\s(l)}w_l\equiv0$, and hence
$\ep^2>\|v_{\tilde\s(l)}-w_l\|^2=\|v_{\tilde\s(l)}\|^2+\|w_l\|^2\geq
2C^2_1$ by Remark  \ref{lem:base}.} Using this fact we can construct
the open connected annuli $A_1,\dots,A_h$ in such a way that
\begin{itemize}
 \item $|x_l|=r_l\text{ implies }x_l\in A_l$,
 \item $\bigcup_{l=1}^h\overline{A_l}=\R^N$ and $A_l\cap A_p=\emptyset\text{
       when }p\neq l$,
 \item
 $\mathrm{supp}V_i\subset\bigcup_{m=1}^{h_i}\overline{A_{\tilde\s^{-1}(im)}}$
       for every $i$
\end{itemize}
(recall that, by assumption, $\mathrm{int}(\mathrm{supp} V_i) \cap
\mathrm{int}(\mathrm{supp} V_j)=\emptyset$). By construction, we
obtain that obviously $\supp w_l\cap I_l\neq\emptyset$, while, by
connectedness,
\[
\supp w_{\tilde\s^{-1}(im)}\cap A_{\tilde\s^{-1}(in)}=\emptyset
\text{ when }n\neq m.
\]
In particular this last fact implies that, for $n\neq
m$,\footnote{Since $v_{im}$ vanishes on $\partial I_l$ for every
choice of the indexes, $v_{im}|_{I_l}$ belongs to $\spc$ and then,
when $\tilde\s(l)\neq(i,m)$,
$\ep^2>\|(v_{im}-w_{\tilde\s^{-1}(im)})|_{I_l}\|^2=\|v_{im}|_{I_l}\|^2$.}
\begin{equation}\label{eq:norme_ristrette}
\left\|\left.v_{im}\right|_{A_{\tilde\s^{-1}(in)}}\right\|^2\leq\ep^2\text{
and hence
}\left\|\left.v_{im}\right|_{A_{\tilde\s^{-1}(im)}}\right\|^2\geq
C_1^2-(h_i-1)\ep^2
\end{equation}
(with $C_1$ as in Remark \ref{lem:base}).  Now, depending on the
positive parameters $\l_{im}$'s, let us define the functions $\tilde
v_{im}$'s as
\[
\tilde v_{im}=\left.\left(\sum_{n=1}^{h_i}
\lambda_{in}v_{in}\right)\right|_{A_{\tilde\s^{-1}(im)}}.
\]
By construction we have that $\tilde v_{im}\cdot\tilde
v_{jn}\equiv0$ for every choice of the $\l_{im}$'s. We claim the
existence of $\tilde\l_{im}$'s such that the corresponding $\tilde
v_{im}$'s satisfy
\[
F_{im}(\tilde\l_{11},\dots,\tilde\l_{kh_k})=\|\tilde
v_{im}\|^2-\int_{\R^N}\tilde v_{im}^4\,dx=0\quad\text{ for every
}(i,m).
\]
This will imply that, writing $\tilde w_{l}=\tilde v_{\tilde\s(l)}$,
the $h$--tuple $(w_1,\dots,w_h)$ belongs to $\Neh^*$. Since
$J^*\left(\sum_{i,m}\tilde v_{im}\right) = J^*\left(\sum_{i,m}\tilde
\l_{im} v_{im}\right)$, this will conclude the proof of the lemma.
In order to prove the claim we will use ($k$ times) a classic result
by Miranda concerning the zeroes of maps from a rectangle of
$\R^{h_i}$ into $\R^{h_i}$ (see \cite{mir}), proving the existence,
when $\ep$ is sufficiently small, of constants $0<t\leq T$ such
that, for every $(i,m)$,
\begin{equation}\label{eq:miranda}
\l_{im}=T,\,t\leq\l_{in}\leq T \implies F_{im}<0,\qquad
\l_{im}=t,\,t\leq\l_{in}\leq T \implies F_{im}>0;
\end{equation}
from this and Miranda's Theorem the claim will follow. Let then
$(i,m)$ be fixed, $\l_{im}=T$ and, for $n\neq m$, $0\leq \l_{in}\leq
T$. Exploiting Remark  \ref{lem:base}, equation
\eqref{eq:norme_ristrette}, and the Sobolev embedding of $\spc$ in
$L^4(\R^N)$, we obtain\footnote{For easier notation we write
$l=\tilde\s^{-1}(im)$.}
\[
\left\|\tilde
v_{im}\right\|=\lambda_{im}\left\|\left.v_{im}\right|_{A_l}+\sum_{n\neq
m}\frac{\l_{in}}{\l_{im}}\left.v_{in}\right|_{A_l}\right\|\leq
T\left(C_2+(h_i-1)\ep\right)
\]
and
\[
\int_{\R^N}\tilde v_{im}^4\,dx\geq \l_{im}^4\int_{A_l} v_{im}^4\,dx=
\l_{im}^4\left(\int_{\R^N} v_{im}^4\,dx-\sum_{p\neq l}\int_{A_p}
v_{im}^4\,dx\right)\geq T^4\left(C_1-(h_i-1)C^4_S\ep^4\right).
\]
Choosing $\ep_0$ in such a way that, for $\ep<\ep_0$, the last term
is positive, we obtain an inequality of the form
\[
F_{im}\leq a T^2-bT^4<0\quad\text{if }T\text{ is fixed sufficiently
large,}
\]
and the first part of \eqref{eq:miranda} is proved. On the other
hand, let now $T$ be fixed as above, $\l_{im}=t$, and, for $n\neq
m$, $t\leq \l_{in}\leq T$. Using again the Sobolev embedding, we
have
\[
F_{im}=\|\tilde v_{im}\|^2-\int_{\R^N}\tilde v_{im}^4\,dx\geq
\|\tilde v_{im}\|^2 - C_S^4\|\tilde v_{im}\|^4=\|\tilde
v_{im}\|^2\left(1-C_S^4\|\tilde v_{im}\|^2\right).
\]
Then we simply have to prove that, when $t$ is sufficiently small,
$\|\tilde v_{im}\|<1/C^2_S$. As before, by Remark  \ref{lem:base}
and equation \eqref{eq:norme_ristrette}, we obtain
\[
\left\|\tilde
v_{im}\right\|=\left\|\left.\lambda_{im}v_{im}\right|_{A_l}+\sum_{n\neq
m}\left.\l_{in}v_{in}\right|_{A_l}\right\|\leq tC_2+(h_i-1)\ep T.
\]
Hence we can choose $t$ and $\ep_0$ sufficiently small in such a way
that, for every $\ep<\ep_0$, $\|\tilde v_{im}\|<1/C^2_S$. As we just
observed, this implies the second part of \eqref{eq:miranda},
concluding the proof of the proposition.
\end{proof}

\begin{coro}
Since by Remark  \ref{lem:base} we have $\Neh^*\subset\Xh_\ep$ for
every $\ep$, using the previous proposition we obtain the following
equivalent characterizations of $c_\infty$:
\[
\begin{split}
 c_\infty
 &= \inf_{\begin{array}{c}
            w_l\not\equiv0\\
            w_l\cdot w_p\equiv0
          \end{array}}
  \sup_{\l_l>0}J^*\left(\sum_l\l_lw_l\right)\smallskip\\
 &= \inf_{\begin{array}{c}
            (v_{im})\in\Xh_\ep\\
            V_i\cdot V_j\equiv0
          \end{array}}
  \sup_{\l_{im}>0}J^*\left(\sum_{i,m}\l_{im}v_{im}\right)\smallskip\\
 &= \inf_{\begin{array}{c}
            (v_{im})\in\Xh_\ep\\
            V_i\cdot V_j\equiv0
          \end{array}}
  \sup_{\l_{im}>0}J_\b\left(\sum_{m}\l_{1m}v_{1m},\dots,\sum_{m}\l_{km}v_{km}\right).
\end{split}
\]
\end{coro}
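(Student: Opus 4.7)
The plan is to show each of the three infima equals $c_\infty$ by chaining the equalities; Proposition \ref{prop:sing} handles the first, Proposition \ref{prop:nehari_mixed} handles the second, and a direct computation handles the third.

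For the first equality, the inclusion $\Xh^{*}\subseteq\{(w_l):\,w_l\cdot w_p\equiv 0,\,w_l\not\equiv 0\}$ gives $\inf\le c_\infty$ by Proposition \ref{prop:sing}. For the reverse, given any disjoint-support tuple $(w_1,\dots,w_h)$, replacing each $w_l$ by $|w_l|$ (still in $\spc$ with the same norm by Stampacchia) and exploiting the fact that $J^*(\sum_l\l_l w_l)$ splits as $\sum_l J^*(\l_l w_l)$ on disjoint supports, one may reindex the supports into ordered radial annuli to produce an element of $\Xh^{*}$ realizing the same Nehari sup; this is the classical Bartsch--Willem characterization of $c_\infty$.

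For the second equality, the $\le$ direction uses Remark \ref{lem:base}(1): for any $W\in\Kah$ the tuple defined by $v_{im}=w_{\tilde\s^{-1}(im)}$ lies in $\Xh_\ep$ and satisfies $V_i\cdot V_j\equiv 0$ (the components of an element of $\Xh^{*}$ have pairwise disjoint supports). Since $W\in\Neh^{*}$, one has $\sup_{\l_{im}>0}J^*(\sum\l_{im}v_{im})=J^*(W)=c_\infty$, so the infimum is at most $c_\infty$. The $\ge$ direction is exactly the content of Proposition \ref{prop:nehari_mixed}.

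The third equality reduces to showing that $J_\b$ collapses to $J^*$ evaluated at the superposition $\sum_{i,m}\l_{im}v_{im}$ whenever $V_i\cdot V_j\equiv 0$. The competition term $\tfrac{\b}{4}\sum_{i\neq j}\int U_i^2U_j^2\,dx$ then vanishes identically. From $U_iU_j\equiv 0$ and Stampacchia's theorem ($\nabla U\equiv 0$ a.e.\ on $\{U=0\}$) one deduces $\int\nabla U_i\cdot\nabla U_j\,dx=0$, hence $\sum_i\|U_i\|^2=\|\sum_iU_i\|^2$; analogously $\sum_i\int U_i^4=\int(\sum_iU_i)^4$ by disjointness. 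Therefore $J_\b(U_1,\dots,U_k)=J^*(\sum_{i,m}\l_{im}v_{im})$, the two suprema coincide, and so do the two infima.

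The main technical point is the pointwise reasoning in the third step, namely the vanishing of the gradient cross-term $\int\nabla U_i\cdot\nabla U_j\,dx$, which, although routine, requires invoking Stampacchia; every other step is either a direct application of the preceding results or bookkeeping with the definitions of $\Xh_\ep$ and $\tilde\s$.
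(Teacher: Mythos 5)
Your argument is correct and is precisely the route the paper intends: the corollary is stated there without a separate proof, being derived directly from Remark \ref{lem:base} (which places the tuples $v_{im}=w_{\tilde\s^{-1}(im)}$ built from $W\in\Kah$ inside $\Xh_\ep$ with disjointly supported $V_i$'s, giving the $\leq$ direction of the middle characterization), from Proposition \ref{prop:nehari_mixed} (the $\geq$ direction), and from the observation that when $V_i\cdot V_j\equiv0$ the coupling term of $J_\b$ vanishes while the quadratic and quartic parts split across supports, so $J_\b$ collapses to $J^*$ of the superposition. The only soft spot is your first equality, where ``reindexing arbitrary disjoint supports into ordered radial annuli'' is not a literal operation for non-annular supports; but since you, like the paper (Proposition \ref{prop:sing}), defer that classical characterization of $c_\infty$ to Bartsch--Willem, nothing essential is lost.
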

In the same spirit of the previous corollary, for $i=1,\dots,k$ and
$m=1,\dots,h_i$, let $\l_{im}\geq0$ and
$u_{im}\in\spc\setminus\{0\}$. We write
\[
\begin{array}{l}
 \L_i=(\l_{i1},\dots,\l_{ih_i}),\qquad U_i=\sum_m u_{im},\qquad \L_iU_i=\sum_m\l_{im}u_{im},\smallskip\\
 \Phi_\b(\L_1,\dots,\L_k)=J_\b(\L_1U_1,\dots,\L_kU_k),
\end{array}
\]
in such a way that $\Phi$ is a $C^2$--function. Moreover, let
\[
M_\b(u_{11},\dots,u_{kh_k})=\sup_{\l_{im}>0}\Phi_\b(\L_1,\dots,\L_k)
\]
and finally
\begin{equation}\label{eq:c_beta}
c_{\ep,\b}=\inf_{\Xh_\ep}M_\b.
\end{equation}
Our main results are the following.
\begin{teo}\label{teo:main}
There exist $\bar\ep>0$ and $\bar\b=\bar\b(\bar\ep)$ such that, if
$0<\ep<\bar\ep$ and $\b>\bar\b$, then $c_{\ep,\b}$ is a critical
value for $J_\b$, corresponding to a solution of \eqref{eq:sys}
belonging to $\Xh_\ep$.
\end{teo}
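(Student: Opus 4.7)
The plan is to realize $c_{\ep,\b}$ as a constrained minimum of $M_\b$ on the open set $\Xh_\ep$, to trap this minimum strictly inside $\Xh_\ep$ for $\b$ large, and then to read off the Euler--Lagrange equations \eqref{eq:sys} at the scaled configuration. The structure is a Nehari-type min--max in which Proposition \ref{prop:nehari_mixed} plays the role of the basic non-interaction estimate.

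First I would make the inner maximization defining $M_\b$ rigorous: for each $(u_{11},\dots,u_{kh_k})\in\Xh_\ep$, the bounds of Remark \ref{lem:base} combined with a standard Nehari concavity argument show that $\L\mapsto\Phi_\b(\L)$ admits a unique critical point $\L^*(u)=(\l_{im}^*(u))$ attaining the supremum, with entries uniformly bounded and bounded away from zero in $\b$, $\ep$ and $u$; smoothness of $\Phi_\b$ and non-degeneracy of its Hessian at $\L^*$ make $u\mapsto\L^*(u)$ of class $C^1$. Existence of a minimizer for $c_{\ep,\b}$ then follows by direct methods: along a minimizing sequence in $\Xh_\ep$, boundedness in $(\spc)^h$ together with the compact embedding of Lemma \ref{lem:strauss} produces an $H^1$-weak and $L^4$-strong limit $(u_{im}^*)\in\overline{\Xh_\ep}$, on which $M_\b$ is continuous because $\L^*$ stays in a compact set.

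The main obstacle is to show $d_{\tilde\sigma}(u^*,\Kah)<\ep$, i.e.\ that the minimizer lies in the interior of $\Xh_\ep$. The upper bound $c_{\ep,\b}\le c_\infty$ is free by testing $M_\b$ against any $W\in\Kah\subset\Xh_\ep$, where the $\b$-term vanishes by disjointness of supports. For the matching lower bound on $\partial\Xh_\ep$, given $u$ with $d_{\tilde\sigma}(u,\Kah)=\ep$ I would first argue that the scaled tuple $(\l_{im}^* u_{im})$ must have $L^2$-overlaps between distinct components of order $1/\b$, otherwise $M_\b(u)\to+\infty$; then use the annular partition constructed in the proof of Proposition \ref{prop:nehari_mixed} to project $u$ onto a configuration $(\tilde v_{im})$ with $\tilde V_i\cdot \tilde V_j\equiv 0$, at the price of an $o_\b(1)$ error. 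Applying Proposition \ref{prop:nehari_mixed} to $(\tilde v_{im})$ yields $\sup_\L J^*(\sum \l_{im}\tilde v_{im})\ge c_\infty$, and the compactness of $\Kah$ together with the strict minimality of $c_\infty$ on $\Neh^*$ upgrades this to $c_\infty+\alpha(\ep)$ whenever $(\tilde v_{im})$ stays at positive distance from $\Kah$. For $\b$ large depending on $\ep$, this produces a strict separation $\inf_{\partial\Xh_\ep}M_\b\ge c_\infty+\alpha(\ep)/2>c_{\ep,\b}$; I expect this quantitative overlap-to-projection passage to be the hardest point.

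With $(u_{im}^*)$ in the interior, any small variation $u_{im}+t\phi_{im}$ remains admissible. Differentiating $M_\b(u+t\phi)$ at $t=0$ and using that $\partial_{\L}\Phi_\b(\L^*;u)=0$ because $\L^*$ is a maximizer, the chain rule collapses to the condition that the functional derivative of $J_\b$ at $(\L_1^* U_1^*,\dots,\L_k^* U_k^*)$ vanishes on every variation of the form $(0,\dots,\l_{im}^*\phi_{im},\dots,0)$. Since every $\l_{im}^*>0$, this is equivalent to $\bar U_i:=\sum_m \l_{im}^* u_{im}^*$ solving the $i$-th equation of \eqref{eq:sys}; nontriviality of $\bar U_i$ and the strong maximum principle give $\bar U_i>0$, while the closeness of $\L^*$ to $(1,\dots,1)$ for $\ep$ small and $\b$ large guarantees that the resulting decomposition $(\l_{im}^* u_{im}^*)$ still lies in $\Xh_\ep$.
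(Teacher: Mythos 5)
Your overall strategy (unique inner maximizer $\bar\L$ depending smoothly on $u$, direct minimization of $M_\b$, trapping the minimizer away from the boundary for $\b$ large, then Euler--Lagrange) is the same Nehari-type scheme as the paper's, and your envelope-theorem computation of $\frac{d}{dt}M_\b(u+t\phi)$ is correct. The boundary-trapping step is organized differently: you propose a quantitative separation $\inf_{\partial\Xh_\ep}M_\b\geq c_\infty+\alpha(\ep)/2$, whereas the paper works on the normalized set $\Neh_\b=\{\bar\l_{im}=1\}$ and proves by a soft compactness/contradiction argument (Lemmas \ref{lem:weak_closure_infty} and \ref{lem:excision}) that $\Neh_\b\subset\tilde\Xh_{\ep/2}$ for $\b$ large, which sidesteps the delicate ``overlap-to-projection'' estimate you rightly identify as hard. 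That difference is a matter of taste and both routes are in principle viable.

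The genuine gap is in the last paragraph. The set $\Xh_\ep$ carries the pointwise constraint $u_{im}\geq 0$, and the positive cone of $\spc$ has empty interior: since $u^*_{im}$ is continuous away from the origin and decays at infinity, there are always points where it vanishes or is arbitrarily small, so for a sign-changing $\phi_{im}$ the perturbation $u^*_{im}+t\phi_{im}$ generically leaves $\Xh_\ep$ for \emph{every} $t>0$. Hence ``any small variation remains admissible'' is false, metric interiority with respect to $d_{\tilde\sigma}$ notwithstanding; minimality only yields the one-sided inequality $\nabla I(u^*)\cdot\phi\geq 0$ for admissible $\phi$, not the full Euler--Lagrange equation. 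Your subsequent use of the strong maximum principle to get $\bar U_i>0$ is then circular, because it presupposes the equation you have not yet derived. The paper's resolution reverses the order and is essential: first test only with $\phi\geq 0$ supported in an annulus where $U_i\leq 1/2$, obtaining a variational \emph{inequality} that exhibits $U_i$ as a supersolution of $-\Delta U_i+a(x)U_i=0$ with $a>0$, whence $U_i>0$ by the strong maximum principle; only then, knowing some pulse $u_{im}$ is strictly positive on the (small) support of an arbitrary $\phi$, is $s_{im}u^*_{im}+t\phi$ admissible for small $t$, and the one-sided inequality applied to $\pm\phi$ yields the equation. Without this two-stage argument (Lemma \ref{lem:variational_lemma} plus the localization), your final step does not close.
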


\begin{teo}\label{teo:main2}
Let $0<\ep<\bar\ep$ (as in the previous theorem) be fixed and
$(\b_s)_{s\in\N}$ be such that $\b_s\to+\infty$. Finally, let
$(U_1^s,\dots,U_k^s)$ be any solution of \eqref{eq:sys} at level
$c_{\ep,\b}$ and belonging to $\Xh_\ep$. Then, up to subsequences,
\[
d_{\tilde\sigma}\left((U_1^s,\dots,U_k^s),\Kah\right)\to0
\]
as $s\to+\infty$.
\end{teo}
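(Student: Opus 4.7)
The strategy is weak compactness followed by an energy--matching argument: extract a weak subsequential limit of $(U^s)$, recognize its sign--changing combination as an element of $\Kah$ by comparing $J^*$--energies, and deduce strong convergence by equality in the lower semicontinuity.

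First, by Remark \ref{lem:base} the sequences $(u_{im}^s)$ are bounded in $\spc$, so up to subsequences $u_{im}^s\rightharpoonup u_{im}^\infty$ weakly in $\spc$ and strongly in $L^4(\R^N)$ by Lemma \ref{lem:strauss}; the lower bound $\int(u_{im}^s)^4\ge C_1^2$ passes to the limit, so each $u_{im}^\infty\not\equiv 0$. From $(U^s)\in\Xh_\ep$ and the compactness of $\Kah$ (Proposition \ref{prop:sing}) I may also extract $W^s\in\Kah$ with $\|u_{im}^s-w^s_{\tilde\s^{-1}(im)}\|<\ep$ and $W^s\to W^\star\in\Kah$. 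Evaluating $M_{\b_s}$ at any element of $\Kah\subset\Xh_\ep$---the disjoint supports of the $w_l$'s kill the $\b_s$--coupling and the Nehari identity pins the supremum at $\L=(1,\dots,1)$---gives $c_{\ep,\b_s}\le c_\infty$. Multiplying the $i$--th equation of \eqref{eq:sys} by $U_i^s$ and summing yields the identities $\sum_i\|U_i^s\|^2=4c_{\ep,\b_s}\le 4c_\infty$ and $2\b_s\sum_{i<j}\int(U_i^s)^2(U_j^s)^2=\sum_i\int(U_i^s)^4-\sum_i\|U_i^s\|^2\le C$. In particular $\int(U_i^s)^2(U_j^s)^2\to 0$, and strong $L^4$ convergence gives $U_i^\infty U_j^\infty\equiv 0$ a.e.\ for $i\ne j$.

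Second, using the asymptotic analysis of competition--diffusion systems from \cite{ctv,ctv2} (uniform Lipschitz regularity across the interface between components), I pass to the limit in \eqref{eq:sys} and show that the sign--changing function $W^\infty:=\sum_{l=1}^h(-1)^l u^\infty_{\tilde\s(l)}$, well--defined thanks to the annular structure inherited from $W^\star$ and the disjointness $U_i^\infty U_j^\infty=0$, is a weak solution of \eqref{eq:sing} on all of $\R^N$ with exactly $h-1$ sign changes. By Proposition \ref{prop:sing}, $J^*(W^\infty)\ge c_\infty$. Conversely, the Nehari identity for $W^\infty$ reads $\|W^\infty\|^2=\int(W^\infty)^4$, so $J^*(W^\infty)=\tfrac14\|W^\infty\|^2=\tfrac14\sum_i\|U_i^\infty\|^2\le\tfrac14\liminf_s\sum_i\|U_i^s\|^2\le c_\infty$ by weak lower semicontinuity and the first step. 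Equality throughout forces $W^\infty\in\Kah$, $c_{\ep,\b_s}\to c_\infty$, and $\sum_i\|U_i^s\|^2\to\sum_i\|U_i^\infty\|^2$, promoting each $U_i^s\to U_i^\infty$ to strong convergence in $\spc$. The disjoint annular bumps of $W^\infty$ then identify $u_{im}^\infty$ with $w^\infty_{\tilde\s^{-1}(im)}$; combined with the localization $\|u_{im}^s-w^s_{\tilde\s^{-1}(im)}\|<\ep$ and the strong convergence of $U_i^s$, this transfers into pulse--by--pulse strong convergence $u_{im}^s\to w^\infty_{\tilde\s^{-1}(im)}$, giving $d_{\tilde\s}(U^s,\Kah)\le d_{\tilde\s}(U^s,W^\infty)\to 0$.

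The hard step is the second one: showing that $W^\infty$ is a genuine weak solution of \eqref{eq:sing} on the whole of $\R^N$, rather than only on the interior of each support. The difficulty lies in the interaction density $\b_s U_i^s\sum_{j\ne i}(U_j^s)^2$, which is bounded in $L^1$ but concentrates on the interfaces between the components; one must rule out a surviving distributional defect on the free boundary by invoking the uniform phase--segregation regularity developed in \cite{ctv,ctv2}. Once this is in place, the energy--matching chain $J^*(W^\infty)\ge c_\infty\ge\liminf c_{\ep,\b_s}\ge J^*(W^\infty)$ and the standard weak--to--strong upgrade close the argument.
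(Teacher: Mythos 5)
Your first step is sound and essentially reproduces the first half of the paper's Lemma \ref{lem:weak_closure_infty}: boundedness, weak convergence and compact $L^4$ embedding, the identity $J_{\b_s}(U^s)=\tfrac14\sum_i\|U_i^s\|^2$ on solutions, the $\b_s$--uniform bound on the interaction term, and hence $U_i^\infty U_j^\infty\equiv0$ for $i\neq j$. The genuine gap is in your second step, where you obtain the lower bound $\geq c_\infty$ by trying to show that $W^\infty=\sum_l(-1)^l u^\infty_{\tilde\s(l)}$ is a sign--changing solution of \eqref{eq:sing}. Two things go wrong. First, $W^\infty$ is not a legitimate $h$--bump nodal function: the segregation you proved only separates \emph{different} components; two pulses $u^\infty_{im}$, $u^\infty_{in}$ of the \emph{same} component are only known to overlap by $O(\ep)$ (Remark \ref{lem:base}), and for admissible maps $\s$ with $k\geq3$ two such pulses can receive opposite signs under $(-1)^l$ (e.g.\ $h=4$, $\s=(1,2,3,1)$ gives component $1$ the bumps $l=1$ and $l=4$). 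Then $W^\infty$ has cancellations, need not lie in $\Xh^*$, and $\|W^\infty\|^2=\sum_i\|U_i^\infty\|^2$ fails by the intra--component cross terms; your ``equality throughout'' chain only closes up to $O(\ep)$, which does not place the limit \emph{in} $\Kah$ and does not give $d_{\tilde\s}\to0$. Second, even for a well--defined $W^\infty$, the assertion that it solves \eqref{eq:sing} \emph{across} the interfaces is exactly the free--boundary regularity statement the paper is at pains to avoid: \cite{ctv,ctv2} carry out that analysis for minimal solutions (on bounded domains), and it is not available off the shelf for the non--minimal critical points considered here; the concentration of $\b_s U_i^s\sum_{j\neq i}(U_j^s)^2$ on the interface is precisely the point you cannot wave away by citation.

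The paper closes this step with a purely variational device that your proposal never invokes: Proposition \ref{prop:nehari_mixed} shows that \emph{any} configuration in $\Xh_\ep$ whose components have pairwise disjoint supports satisfies $\sup_{\l}J^*\bigl(\sum_{i,m}\l_{im}v_{im}\bigr)\geq c_\infty$, even when pulses within one component overlap (this is the Miranda--theorem construction of a competitor in $\Neh^*$). Applied to the limit configuration, this yields $c_\infty\leq M_\b(u^*)\leq\liminf J_{\b_s}(U^s)\leq c_\infty$ with no PDE input, forces all inequalities to be equalities, upgrades the convergence to strong, and identifies $(u^*_{\tilde\s(1)},\dots,u^*_{\tilde\s(h)})$ as an element of $\Kah$; Theorem \ref{teo:main2} then follows by the contradiction scheme of Lemma \ref{lem:excision}, showing $\Neh_\b\subset\tilde\Xh_{\nu\ep}$ for every $\nu\in(0,1)$ once $\b$ is large. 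You should replace your second step by an appeal to Proposition \ref{prop:nehari_mixed} (or prove an equivalent comparison); as written, the argument does not go through.
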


\section{Estimates for any $\beta$ and $\ep$ small}

Let us start with some estimates on $c_{\ep,\b}$.
\begin{lemma}\label{lem:c_b-bdd}
When $\ep$ is fixed, $c_{\ep,\b}$ is non decreasing in $\b$, and
$c_{\ep,\b}\leq c_\infty$.
\end{lemma}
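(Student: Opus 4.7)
The statement has two claims, both of which should follow fairly directly from the definitions; I do not expect a significant obstacle here.

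For the monotonicity in $\beta$, the plan is to fix the competitor $(u_{11},\dots,u_{kh_k})\in\Xh_\ep$ and the scaling vector $(\L_1,\dots,\L_k)$ with all $\l_{im}>0$, and observe that
\[
\Phi_\b(\L_1,\dots,\L_k)=J_\b(\L_1U_1,\dots,\L_kU_k)
\]
depends on $\b$ only through the term $\tfrac{\b}{4}\sum_{i\neq j}\int_{\R^N}(\L_iU_i)^2(\L_jU_j)^2\,dx$, which is non-negative. Hence $\b\mapsto\Phi_\b(\L)$ is non-decreasing pointwise in $\L$. Since taking $\sup_{\L}$ and then $\inf_{\Xh_\ep}$ preserves this monotonicity, $\b\mapsto c_{\ep,\b}$ is non-decreasing.

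For the upper bound $c_{\ep,\b}\leq c_\infty$, the natural strategy is to plug in a test element coming from $\Kah$ itself and show that its $M_\b$-value is exactly $c_\infty$, independently of $\b$. Fix any $W=(w_1,\dots,w_h)\in\Kah$ (non-empty by Proposition~\ref{prop:sing}) and set $u_{im}:=w_{\tilde\s^{-1}(im)}$. Then $(u_{im})\in\Kah\subset\Xh_\ep$ by Remark~\ref{lem:base}. Because the $w_l$'s have supports contained in pairwise disjoint annuli and the assignment $\tilde\s$ is used to label them, all products $u_{im}u_{jn}$ (whether $i=j$ or $i\neq j$, as long as $(i,m)\neq(j,n)$) vanish a.e. In particular $U_iU_j\equiv0$ for $i\neq j$, so the interaction part of $J_\b$ vanishes, and the energy decouples into a sum over $l$:
\[
J_\b(\L_1U_1,\dots,\L_kU_k)=\sum_{l=1}^h\left[\tfrac12\l_{\tilde\s(l)}^2\|w_l\|^2-\tfrac14\l_{\tilde\s(l)}^4\int_{\R^N}w_l^4\,dx\right]=\sum_{l=1}^h J^*(\l_{\tilde\s(l)}w_l).
\]

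Maximizing over the $\l_{im}>0$ reduces to maximizing each $J^*(\l w_l)$ separately over $\l>0$, and since $w_l\in\Neh^*$ (i.e.\ $\|w_l\|^2=\int w_l^4\,dx$) this supremum equals $J^*(w_l)$. Summing and using $W\in\Kah$ gives
\[
M_\b(u_{11},\dots,u_{kh_k})=\sum_{l=1}^hJ^*(w_l)=J^*(W)=c_\infty,
\]
so taking the infimum over $\Xh_\ep$ yields $c_{\ep,\b}\leq c_\infty$. The only step that requires a modicum of care is the bookkeeping verifying that under the identification $u_{im}=w_{\tilde\s^{-1}(im)}$ \emph{all} the relevant cross-products—not only the inter-species ones—vanish, so that $J_\b$ genuinely splits as a sum of single-pulse contributions.
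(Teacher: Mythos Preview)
Your proof is correct and follows essentially the same approach as the paper: compare $J_{\b_1}\leq J_{\b_2}$ pointwise and pass to the inf--sup for monotonicity, then test with an element of $\Kah$ (where all cross--products $w_lw_p$ vanish so that $J_\b$ decouples and the interaction term drops out) to obtain $M_\b=c_\infty$ and hence $c_{\ep,\b}\leq c_\infty$. Your presentation is in fact slightly more explicit than the paper's in verifying the decoupling of $J_\b$ into single--pulse contributions.
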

\begin{proof}
First of all, if $\b_1<\b_2$, then for any
$(u_{11},\dots,u_{kh_k})\in\Xh_\ep$ we have
$J_{\b_1}(\L_1U_1,\dots,\L_kU_k)< J_{\b_2}(\L_1U_1,\dots,\L_kU_k)$,
and we can pass to the inf--sup obtaining $c_{\ep,\b_1}\leq
c_{\ep,\b_2}$. Now, let $\b$ be fixed. By definition we have
\[
c_\infty=\inf_{\Xh_0}M_\b=\inf\left\{M_\b(u_{11},\dots,u_{kh_k}):\,\text{there
exists } W\in\Xh^*\text{ such that
}u_{im}=w_{\tilde\s^{-1}(im)}\right\}.
\]
Indeed, in such situation $w_lw_p=0$ for $l\neq p$ and thus, letting
$\mu_l=\l_{\tilde\s(l)}$, we obtain
$J_\b(\L_1U_1,\dots,\L_kU_k)=J^*(\sum\mu_lw_l)$. To conclude we
simply observe that
\[
\Xh_0\subset\Xh_\ep\qquad\text{ for every }\ep.\qedhere
\]
\end{proof}

\begin{coro}\label{coro:tilde_X}
Let
\[
\tilde\Xh_\ep=\left\{(u_{11},\dots,u_{kh_k})\in\Xh_\ep:\,M_\b(u_{11},\dots,u_{kh_k})<
c_\infty+\min\left(1,\frac{1}{\b}\right)\right\}.
\]
Then
\[
c_{\ep,\b}=\inf_{\tilde\Xh_\ep}M_\b.
\]
\end{coro}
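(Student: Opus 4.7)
The plan is to prove the two inequalities separately, and both should be immediate consequences of the definitions together with Lemma \ref{lem:c_b-bdd}.

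For the easy direction, I observe that $\tilde\Xh_\ep \subset \Xh_\ep$ by construction, so trivially
\[
\inf_{\tilde\Xh_\ep} M_\b \;\geq\; \inf_{\Xh_\ep} M_\b \;=\; c_{\ep,\b}.
\]

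For the reverse inequality, I would take a minimizing sequence $(u_{11}^n,\dots,u_{kh_k}^n) \in \Xh_\ep$ with $M_\b(u_{11}^n,\dots,u_{kh_k}^n) \to c_{\ep,\b}$. By Lemma \ref{lem:c_b-bdd} we have $c_{\ep,\b} \leq c_\infty$, so in particular $c_{\ep,\b} < c_\infty + \min(1,1/\b)$. Hence for $n$ large enough, $M_\b(u_{11}^n,\dots,u_{kh_k}^n) < c_\infty + \min(1,1/\b)$, meaning the tail of the minimizing sequence lies in $\tilde\Xh_\ep$. This yields $\inf_{\tilde\Xh_\ep} M_\b \leq c_{\ep,\b}$ and completes the argument.

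There is no real obstacle here; the corollary is a bookkeeping statement whose purpose is to restrict the admissible class for the min--max to a sublevel set of $M_\b$, which will be useful later (presumably to extract a Palais--Smale sequence at level $c_{\ep,\b}$ that stays bounded away from any boundary behavior). The only thing one needs from earlier in the paper is the bound $c_{\ep,\b} \leq c_\infty$ from Lemma \ref{lem:c_b-bdd}, which provides the strict gap that lets one insert the $\min(1,1/\b)$ threshold without losing any admissible configurations.
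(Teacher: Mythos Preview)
Your proof is correct and matches the paper's intended argument: the corollary is stated without proof in the paper, being an immediate consequence of Lemma \ref{lem:c_b-bdd}, and you have spelled out exactly the two--line reasoning (inclusion for one inequality, the bound $c_{\ep,\b}\leq c_\infty$ plus a minimizing sequence for the other).
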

From now on, we will restrict our attention to the elements of
$\tilde\Xh_\ep$. We remark that $\tilde\Xh_\ep$ depends on $\b$:
actually, if, for some $i\neq j$, $U_iU_j\neq0$ on a set of positive
measure, then the corresponding $(u_{11},\dots,u_{kh_k})$ may not
belong to $\tilde\Xh_\ep$ if $\b$ is sufficiently large.
Nevertheless, all the results we will prove in this section will
depend only on $\ep$, and not on $\b$.
\begin{lemma}\label{lem:achieve_M}
Let $(u_{11},\dots,u_{kh_k})\in\tilde\Xh_\ep$. Then
$M_\b(u_{11},\dots,u_{kh_k})$ is positive and achieved:
\[
0<M_\b(u_{11},\dots,u_{kh_k})=\Phi_\b(\bar\L_1,\dots,\bar\L_k),
\]
with
\[
\nabla\Phi_\b(\bar\L_1,\dots,\bar\L_k)\cdot(\bar\L_1,\dots,\bar\L_k)=0.
\]
\end{lemma}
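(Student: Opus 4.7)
The plan is to establish the two claims by (i) direct testing to obtain positivity of $M_\b$, (ii) using the finiteness of $M_\b$ built into the definition of $\tilde\Xh_\ep$ to secure coercivity along rays, and (iii) reading off the Euler identity from a one-variable optimization argument.

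\emph{Positivity.} I would test $\Phi_\b$ at the diagonal point $\L=(t,\dots,t)\in\R^h$ for small $t>0$. A direct expansion gives
\[
\Phi_\b(t,\dots,t)=\frac{t^2}{2}\sum_{i=1}^k\|U_i\|^2+O(t^4),
\]
and Remark~\ref{lem:base}(2)--(3) yields $\|U_i\|^2\ge\sum_m\|u_{im}\|^2-C\ep\ge h_iC_1^2-C\ep>0$ once $\ep$ is small enough. Thus $\Phi_\b(t,\dots,t)>0$ for some small $t>0$, proving $M_\b>0$.

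\emph{Coercivity from finiteness.} Writing $\L=t\mu$ with $\mu$ in the positive part of the unit sphere of $\R^h$, one has
\[
\Phi_\b(t\mu)=\frac{t^2}{2}A(\mu)-\frac{t^4}{4}B(\mu),
\]
where $A(\mu)=\sum_i\|\mu_i U_i\|^2$ and $B(\mu)=\sum_i\int_{\R^N}(\mu_i U_i)^4-\b\sum_{i\neq j}\int_{\R^N}(\mu_i U_i)^2(\mu_j U_j)^2$. The hypothesis $(u_{im})\in\tilde\Xh_\ep$ forces $M_\b<c_\infty+1$. If $B(\mu_0)\le0$ at some interior $\mu_0$, then $\Phi_\b(t\mu_0)\to+\infty$ as $t\to+\infty$, a contradiction; likewise, were $B$ to vanish at a boundary point $\bar\mu$, the along-ray maxima $A^2(\mu^n)/(4B(\mu^n))$ would diverge for interior $\mu^n\to\bar\mu$. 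Continuity and compactness therefore give $B(\mu)\ge c>0$ uniformly, and consequently
\[
\Phi_\b(\L)\le\frac{A_{\max}}{2}|\L|^2-\frac{c}{4}|\L|^4\to-\infty\quad\text{as }|\L|\to+\infty.
\]

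\emph{Attainment and Euler identity.} Any maximizing sequence for $M_\b$ is thus bounded, and by continuity of $\Phi_\b$ a subsequence converges to some $\bar\L$ with $\Phi_\b(\bar\L)=M_\b$; since $M_\b>0$ we have $\bar\L\neq0$. Because the maximum of $\Phi_\b$ on the closed positive orthant is attained at $\bar\L$, the one-variable map $t\mapsto\Phi_\b(t\bar\L)$ is maximized at $t=1$, whence
\[
\nabla\Phi_\b(\bar\L)\cdot\bar\L=\frac{d}{dt}\Phi_\b(t\bar\L)\bigg|_{t=1}=0.
\]

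\emph{Main obstacle.} The delicate step is the uniform positivity $B\ge c>0$ on the closed positive part of the sphere: excluding zeros of $B$ at the boundary must exploit that such a zero forces the along-ray maxima (and hence $M_\b$) to diverge, in conflict with $(u_{im})\in\tilde\Xh_\ep$. Once this estimate is in place, coercivity, compactness and the one-variable Euler identity combine routinely.
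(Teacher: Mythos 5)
Your proof is correct and follows essentially the same route as the paper's: both split $\Phi_\b$ into its homogeneous quadratic and quartic parts, get positivity from the quadratic part at small $t$, use the bound $M_\b<c_\infty+1$ built into the definition of $\tilde\Xh_\ep$ to force the quartic part to be strictly negative on the unit sphere (hence coercivity and attainment), and then read off the Euler identity from optimality. Your explicit handling of the boundary of the positive orthant and your radial-derivative derivation of $\nabla\Phi_\b(\bar\L)\cdot\bar\L=0$ are slightly more careful than the paper's componentwise argument, but they are refinements rather than a different method.
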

\begin{proof}
We drop the dependence on $\b$. We observe that $\Phi$ is the sum of
two polynomials, which are homogenous of degree two and four,
respectively:
\[
\Phi(\L_1,\dots,\L_k)=\frac12P_2(\L_1,\dots,\L_k)+\frac14P_4(\L_1,\dots,\L_k)
\]
where $P_2=\sum_{i,m}\|\l_{im}u_{im}\|^2$. Therefore, for $t\geq0$,
\[
\Phi(t\L_1,\dots,t\L_k)=\frac12t^2P_2(\L_1,\dots,\L_k)+\frac14t^4P_4(\L_1,\dots,\L_k),
\]
Since $u_{im}\not\equiv0$ for every $i$, $m$, if some $\l_{im}$ is
different from 0 then $P_2>0$. Thus, for $t$ small,
$0<\Phi(t\L_1,\dots,t\L_k)\leq M$.\\ As a consequence, we can write
\[
M=\sup\left\{\Phi(t\L_1,\dots,t\L_k):\,t>0,\,\sum_{i=1}^k|\L_i|^2=1\right\}.
\]
On one hand, we have
\[
\max_{\sum|\L_i|=1}P_2(\L_1,\dots,\L_k)=a>0.
\]
On the other hand, since $M<c_\infty+1/\b$, then
\[
\max_{\sum|\L_i|=1}P_4(\L_1,\dots,\L_k)=-b<0
\]
(otherwise we would have $M=+\infty$). But then
\[
\Phi(t\L_1,\dots,t\L_k)\leq\frac{a}{2}t^2-\frac{b}{4}t^4<0\qquad\text{when
} t^2>\frac{2a}{b},
\]
thus $\Phi$ is negative outside a compact set, and $M$ is achieved
by some $(\bar\L_1,\dots,\bar\L_k)$.\\ Finally, since this is a
maximum for $\l_{im}\geq0$, $\bar\l_{im}>0$ implies
$\partial_{\l_{im}}\Phi(\bar\L_1,\dots,\bar\L_k)=0$, therefore
\[
\partial_{\l_{im}}\Phi(\bar\L_1,\dots,\bar\L_k)\cdot\bar\l_{im}=0\quad\text{ for every
}(i,m).
\qedhere
\]
\end{proof}
Thus, if $(u_{11},\dots,u_{kh_k})\in\tilde\Xh_\ep$, then $M_\b$ is a
maximum. We want to prove that, when $\ep$ is small (not depending
on $\b$), the maximum point is uniquely defined, and it smoothly
depends on $(u_{11},\dots,u_{kh_k})$. As a first step, we provide
some uniform estimates for its coordinates.

\begin{lemma}\label{lem:R}
There exists $R>0$, not depending on\footnote{Here $\ep_0$ is as in
Proposition \ref{prop:nehari_mixed}.} $\ep\leq\ep_0$ and $\b$, such
that, for every $(u_{11},\dots,u_{kh_k})\in\tilde\Xh_\ep$,
\begin{enumerate}
\item
$\nabla\Phi(\bar\L_1,\dots,\bar\L_k)\cdot(\bar\L_1,\dots,\bar\L_k)=0$
implies $\sum_i|\bar\L_i|^2< R^2$;
\item
$\sum_i|\L_i|^2= R^2$ implies
$\nabla\Phi(\L_1,\dots,\L_k)\cdot(\L_1,\dots,\L_k)<0$.
\end{enumerate}
\end{lemma}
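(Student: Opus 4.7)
The plan is to exploit the bihomogeneous structure $\Phi(\L) = \tfrac12 P_2(\L) + \tfrac14 P_4(\L)$ developed in the proof of Lemma \ref{lem:achieve_M}, where $P_2$ is $2$-homogeneous and $P_4$ is $4$-homogeneous in $\L$. In particular $\nabla\Phi(\L)\cdot\L = P_2(\L) + P_4(\L)$, and along any ray $\{t\Theta:t>0\}$ with $\sum_i|\Theta_i|^2 = 1$, if $P_4(\Theta)<0$ then $\Phi(t\Theta)$ is maximized at $t^2 = -P_2(\Theta)/P_4(\Theta)$ with maximum value $P_2(\Theta)^2/(4|P_4(\Theta)|)$. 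Since every $(u_{11},\dots,u_{kh_k})\in\tilde\Xh_\ep$ satisfies $M_\b < c_\infty + 1/\b \leq c_\infty + 1$ uniformly in $\b$, all these ray-maxima are bounded by $c_\infty + 1$, and this universal bound will produce the universal constant $R$.

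I would first establish a uniform coercivity estimate of the form
\[
P_2(\L) \;=\; \sum_i \|V_i\|^2 \;\geq\; \frac{C_1^2}{2}\,\sum_i|\L_i|^2,
\]
obtained by expanding $\|V_i\|^2 = \sum_m \lambda_{im}^2\|u_{im}\|^2 + 2\sum_{m<n}\lambda_{im}\lambda_{in}\langle u_{im},u_{in}\rangle_{H^1}$, bounding the diagonal via $\|u_{im}\|^2\geq C_1^2$ from Remark \ref{lem:base}(2), controlling the off-diagonal terms by $|\langle u_{im},u_{in}\rangle_{H^1}|\leq C\ep$ from Remark \ref{lem:base}(3), and possibly shrinking $\ep_0$ so the diagonal dominates; the resulting constant depends only on $C_1$ and $h$, not on $\b$. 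For part (1), any $\bar\L\neq 0$ with $P_2(\bar\L) + P_4(\bar\L) = 0$ satisfies $\Phi(\bar\L) = \tfrac14 P_2(\bar\L)$, and the function $t\mapsto\Phi(t\bar\L)$ attains its maximum along the ray at $t=1$, so $\Phi(\bar\L)\leq M_\b < c_\infty + 1$. Coupled with the coercivity, this yields $\sum_i|\bar\L_i|^2 < 8(c_\infty+1)/C_1^2$.

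For part (2), I would fix $R^2 := 8(c_\infty+1)/C_1^2$. For any $\Theta$ with $\sum_i|\Theta_i|^2 = 1$ in the positive orthant, $\sup_{t>0}\Phi(t\Theta) \leq M_\b < \infty$ forces $P_4(\Theta) < 0$ (else $\Phi(t\Theta)\to+\infty$ since $P_2(\Theta)>0$ by coercivity), and the explicit formula for the ray-maximum combined with the coercivity give
\[
\frac{-P_2(\Theta)}{P_4(\Theta)} \;=\; \frac{P_2(\Theta)}{|P_4(\Theta)|} \;\leq\; \frac{4 M_\b}{P_2(\Theta)} \;\leq\; \frac{8 M_\b}{C_1^2} \;<\; R^2.
\]
Hence, for $\L = R\Theta$ on the sphere $\sum_i|\L_i|^2 = R^2$,
\[
\nabla\Phi(\L)\cdot\L \;=\; R^2\bigl(P_2(\Theta) + R^2 P_4(\Theta)\bigr) \;<\; 0,
\]
which proves (2). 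The main obstacle is really the coercivity step: one must show that the pulses $u_{i1},\dots,u_{ih_i}$ carried by the same component are almost $H^1$-orthogonal with $\ep$-small error, a property that Remark \ref{lem:base}(3) guarantees precisely because $\Xh_\ep$ forces each $u_{im}$ to concentrate near a distinct $w_l$ supported in a distinct annulus; once this lower bound on $P_2$ is in hand, everything else follows routinely from the bihomogeneity of $\Phi$ and the standing upper bound $M_\b < c_\infty + 1$.
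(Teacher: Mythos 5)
Your proof is correct and follows essentially the same route as the paper: both rest on the decomposition $\Phi=\tfrac12P_2+\tfrac14P_4$, the uniform bound $M_\b<c_\infty+1$, and a lower bound for $P_2(\L)$ in terms of $\sum_i|\L_i|^2$ coming from $\|u_{im}\|\geq C_1$. The only real difference is the coercivity step, where the paper simply asserts $\|\L_iU_i\|^2\geq\sum_m\l_{im}^2\|u_{im}\|^2$ from non-negativity of the $u_{im}$'s, while you control the off-diagonal terms $\langle u_{im},u_{in}\rangle$ by $C\ep$ and absorb them into the diagonal --- arguably a more careful treatment (the gradient part of the cross term is not sign-definite pointwise), at the harmless cost of a possibly smaller $\ep_0$ and a larger constant $R$.
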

\begin{proof}
Since $\Phi\leq M$, using the notations of the proof of the previous
lemma we can write
\[
\Phi(\bar\L_1,\dots,\bar\L_k)=\frac12P_2(\bar\L_1,\dots,\bar\L_k)+
\frac14P_4(\bar\L_1,\dots,\bar\L_k)< c_\infty+1
\]
(recall Corollary \ref{coro:tilde_X}) and
\[
\nabla\Phi(\bar\L_1,\dots,\bar\L_k)\cdot(\bar\L_1,\dots,\bar\L_k)=
P_2(\bar\L_1,\dots,\bar\L_k)+P_4(\bar\L_1,\dots,\bar\L_k)=0,
\]
providing
\begin{equation}\label{eq:bdd_norm}
P_2(\bar\L_1,\dots,\bar\L_k)=\sum_{i=1}^k\|\bar\L_iU_i\|^2<4(c_\infty+1).
\end{equation}
Since every $u_{im}$ is non negative, we have
$\|\bar\L_iU_i\|^2\geq\sum_m\bar\l_{im}^2\|u_{im}\|^2$. But we know
(Remark  \ref{lem:base}) that each $\|u_{im}\|$ is bounded from
below, providing
\[
\sum_{i=1}^k|\bar\L_i|^2<\frac{4(c_\infty+1)}{C_1^2}=R^2.
\]
Now let $(\L_1,\dots,\L_k)$ be fixed with $\sum_i|\L_i|^2= R^2$. For
$t>0$ we write
\[
f(t)=\nabla\Phi(t\L_1,\dots,t\L_k)\cdot(t\L_1,\dots,t\L_k)=
t^2P_2(\L_1,\dots,\L_k)+t^4P_4(\L_1,\dots,\L_k),
\]
and we know, from the discussion above, that $f(\bar t)=0$ implies
$\bar t<1$. Recalling that $P_4$ must be negative (otherwise
$M=+\infty$) we deduce that $f(1)<0$, concluding the proof.
\end{proof}

\begin{rem}\label{rem:bdd_three_addends}
As a consequence of the previous proof (and of Lemma
\ref{lem:achieve_M}) we have that, if
$(u_{11},\dots,u_{kh_k})\in\tilde\Xh_\ep$ and
$M_\b(u_{11},\dots,u_{kh_k})=\Phi_\b(\bar\L_1,\dots,\bar\L_k)$, then
the three quantities
\[
\sum_{i=1}^k\|\bar\L_iU_i\|^2,\qquad \sum_{i=1}^k\int_{\R^N}
\left(\bar\L_iU_i\right)^4\,dx,\qquad\b\sum_{{i,j=1 \atop i\neq
j}}^k\int_{\R^N}\left(\bar\L_iU_i\right)^2\left(\bar\L_jU_j\right)^2\,dx,
\]
are bounded not depending on $\b$: the first by equation
\eqref{eq:bdd_norm}; the second by the first bound and by the
continuous immersion of $\spc$ in $L^4(\R^N)$; the third by the
previous bounds and the fact that $M_\b< c_\infty+1$.
\end{rem}

\begin{lemma}\label{lem:lambda>1/2}
There exists $0<\ep_1\leq\ep_0$ (not depending on $\beta$) such that
if $\ep<\ep_1$, $(u_{11},\dots,u_{kh_k})\in\tilde\Xh_\ep$, and
$\Phi_\b(\bar\L_1,\dots,\bar\L_k)=M_\b(u_{11},\dots,u_{kh_k})$ then
\[
\bar\l_{im}>\frac12\qquad\text{for every }(i,m).
\]
In particular, since $\bar\l_{im}>0$,
$\nabla\Phi_\b(\bar\L_1,\dots,\bar\L_k)=0$.
\end{lemma}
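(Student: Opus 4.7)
I would argue by contradiction: suppose that $\bar\l_{i_0 m_0}\le \tfrac12$ for some index $(i_0,m_0)$, and produce a competitor $\L'$ with $\Phi_\b(\L')>\Phi_\b(\bar\L_1,\dots,\bar\L_k)$, violating the maximality established in Lemma \ref{lem:achieve_M}. The natural choice is $\L'=\bar\L$ in every slot except that $\bar\l_{i_0 m_0}$ is replaced by $1$. Setting $v:=u_{i_0 m_0}$, $A:=\sum_{n\neq m_0}\bar\l_{i_0 n}u_{i_0 n}$, and $B_j:=\bar\L_j U_j$ for $j\neq i_0$, this replaces $\bar\l_{i_0 m_0} v+A$ by $v+A$ inside the $i_0$-slot of $\Phi_\b$ and leaves the other slots untouched, so the difference $\Phi_\b(\L')-\Phi_\b(\bar\L)$ reduces to a single-variable comparison in the $(i_0,m_0)$-coordinate.

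The estimate rests on the elementary identity $2(1-x^2)-(1-x^4)=(1-x^2)^2$. Expanding the relevant terms binomially, the purely diagonal contribution in $v$ collapses to $\tfrac{(1-\bar\l_{i_0 m_0}^2)^2}{4}\|v\|^2$, after using that $\|v\|^2=\int v^4+O(\ep)$; this in turn follows from $v$ being $\ep$-close to some $w_l\in\Kah$ with $\|w_l\|^2=\int w_l^4$. Every off-diagonal integral $\int v^a A^b$ with $b\ge 1$ is $O(\ep)$ by Remark \ref{lem:base}(3) together with the $L^4$ Sobolev embedding and the fact that $w_lw_p\equiv 0$ for $l\ne p$; one simply writes $u_{im}=w_{\tilde\s^{-1}(im)}+(u_{im}-w_{\tilde\s^{-1}(im)})$ and notices that every surviving product has at least one $L^4$-small factor. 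Two new $\b$-terms survive, namely $\tfrac{1-\bar\l_{i_0 m_0}^2}{2}\,\b\sum_j\int v^2B_j^2$ and $(1-\bar\l_{i_0 m_0})\,\b\sum_j\int vAB_j^2$; for $\bar\l_{i_0 m_0}\le 1$ both prefactors are non-negative and both integrands are non-negative, so these terms only help the estimate and can simply be dropped from below. After absorbing the errors using Lemma \ref{lem:R} to bound $|\bar\L|\le R$ uniformly in $\b$, I expect
\[
\Phi_\b(\L')-\Phi_\b(\bar\L)\ \ge\ \tfrac{(1-\bar\l_{i_0 m_0}^2)^2}{4}\|v\|^2-C\ep\bigl(1+R^3\bigr),
\]
with $C$ depending only on $h$ and the Sobolev constant.

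Under the hypothesis $\bar\l_{i_0 m_0}\le \tfrac12$ one has $(1-\bar\l_{i_0 m_0}^2)^2\ge \tfrac{9}{16}$, and $\|v\|^2\ge C_1^2$ by Remark \ref{lem:base}(2), so the leading term is bounded below by $\tfrac{9C_1^2}{64}>0$, a constant independent of $\b$. Choosing $\ep_1$ sufficiently small, depending only on $C_1$, $R$ and $C$, forces $\Phi_\b(\L')>\Phi_\b(\bar\L)$, contradicting the maximality of $\bar\L$. Thus $\bar\l_{im}>\tfrac12$ for every $(i,m)$, and by Lemma \ref{lem:achieve_M} all coordinates being strictly positive yields $\nabla\Phi_\b(\bar\L_1,\dots,\bar\L_k)=0$. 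The main obstacle is the cross interspecies term $\b\int vAB_j^2$: naive Cauchy--Schwarz only bounds it by $O(\b\sqrt\ep)$, which is useless when $\b$ is large; the key observation is that one does not need smallness of this term, only non-negativity, which holds automatically under $\bar\l_{i_0 m_0}\le 1$ because $v,A,B_j\ge 0$.
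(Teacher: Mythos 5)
Your argument is correct, and it reaches the conclusion by a route that differs from the paper's in its mechanism, though not in its ingredients. The paper likewise freezes all coordinates except $\l_{im}$ and discards the interspecies coupling by positivity, but instead of comparing $\bar\l_{im}\le\tfrac12$ directly against $\l_{im}=1$ via the identity $2(1-x^2)-(1-x^4)=(1-x^2)^2$, it establishes the two facts in \eqref{eq:convex}: the endpoint comparison $\Phi|_{\l_{im}=0}<\Phi|_{\l_{im}=1/2}$ (a computation yielding $\tfrac{7}{64}\|w_l\|^2+o(1)$) and the strict convexity $\partial^2_{\l_{im}^2}\Phi\ge(1-3\l_{im}^2)\|w_l\|^2+o(1)>0$ on $[0,1/2]$; together these force $\partial_{\l_{im}}\Phi|_{\l_{im}=1/2}>0$, so no maximizer can sit in $[0,1/2]$. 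Both proofs rest on the same estimates (proximity of $u_{im}$ to some $w_l$ with $\|w_l\|^2=\int w_l^4$, Remark \ref{lem:base} for the $O(\ep)$ cross terms, Lemma \ref{lem:R} for uniformity in $\b$), and your key observation about the $\b$--term --- that one needs only its sign, not its size --- is exactly the paper's. The one thing the paper's route buys that yours does not is the byproduct recorded in Remark \ref{rem:1/2}, namely $\partial_{\l_{im}}\Phi|_{\l_{im}=1/2}>0$ for all admissible values of the remaining coordinates; this is used later, in Proposition \ref{prop:unique_lambda}, to show that $\nabla\Phi$ points inward on the face $\l_{im}=1/2$ of the region $D$ in the degree argument, so if one adopted your proof that derivative estimate would still have to be supplied separately. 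In exchange, your version gives a single explicit inequality, $\Phi_\b(\L')-\Phi_\b(\bar\L)\ge\tfrac{9C_1^2}{64}-C\ep$, with no discussion of second derivatives.
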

\begin{proof}
By the previous lemma we know that $0\leq\bar\l_{im}\leq R$. We
choose $(i,m)$ and, for any $(j,n)\neq(i,m)$, we fix
$0\leq\l_{jn}\leq R$. We will prove that (for $\ep$ sufficiently
small)
\begin{equation}\label{eq:convex}
\Phi|_{\l_{im}=0}<\Phi|_{\l_{im}=1/2}, \qquad\text{ and }\qquad
0\leq\l_{im}\leq1/2 \implies
\partial^2_{\l^2_{im}}\Phi(\L_1,\dots\L_k)>0,
\end{equation}
and the result will follow. Let $V=\sum_{n\neq m}\l_{in}u_{in}$.
Since all the $\l_{in}$'s are bounded, by Remark  \ref{lem:base} we
know that
\[
\left\langle V,u_{im} \right\rangle=o(1),\quad\int_{\R^N}V^p
u_{im}^{q}\,dx=o(1),\qquad\text{ as }\ep\to0.
\]
Moreover, according to the definition of $\Xh_\ep$, let
$l=\tilde\s^{-1}(im)$ and $w_l$ be such that $\|u_{im}-w_l\|<\ep$,
with $\|w_l\|^2=\int_{\R^N}w_l^4\,dx$. On one hand, we have (as
$\ep\to0$)
\[
\begin{split}
\Phi|_{\l_{im}=1/2}-\Phi|_{\l_{im}=0}&\geq\frac12\left[\left\|V+\frac12u_{im}
  \right\|^2-\left\|V\right\|^2\right] - \frac14\int_{\R^N}\left[\left(V+
  \frac12u_{im}\right)^4-\left(V\right)^4\right]\,dx\\
 &= \frac12\left\|\frac12u_{im}\right\|^2-\frac14\int_{\R^N}\left(
  \frac12u_{im}\right)^4\,dx+o(1)\\
 &= \frac12\left\|\frac12w_l\right\|^2-\frac14\int_{\R^N}\left(
  \frac12w_l\right)^4\,dx+o(1)\\
 &=\frac{7}{64}\|w_l\|^2+o(1)>0.
\end{split}
\]
On the other hand, with similar calculations, we obtain
\begin{equation}\label{eq:grad_Phi}
\partial_{\l_{im}}\Phi = \left\langle\L_{i}U_i,u_{im}\right\rangle -
\int_{\R^N}(\L_{i}U_i)^3u_{im}\,dx+\b\int_{\R^N}(\L_iU_i)u_{im}\sum_{j\neq
i}(\L_jU_j)^2\,dx
\end{equation}
and
\begin{equation}\label{eq:der_sec_Phi}
\begin{split}
\partial^2_{\l^2_{im}}\Phi(\L_1,\dots\L_k)&=\|u_{im}\|^2 -
  3\int_{\R^N}(\L_{i}U_i)^2u_{im}^2\,dx+\b\int_{\R^N}u_{im}^2\sum_{j\neq
  i}(\L_jU_j)^2\,dx\\
 &\geq \|w_l\|^2 - 3\int_{\R^N}\l_{im}^2w_l^2\,dx+o(1)\\
 &= (1-3\l_{im}^2)\|w_l\|^2+o(1)>0
\end{split}
\end{equation}
since $\l_{im}\leq1/2$.
\end{proof}
\begin{rem}\label{rem:1/2}
As a byproduct of the previous proof (equation \eqref{eq:convex}),
we have that, if $\ep<\ep_1$, $(u_{11},\dots,u_{kh_k})\in
\tilde\Xh_\ep$, and $0\leq\l_{jn}\leq R$, then
\[
\partial_{\l_{im}}\Phi(\L_1,\dots\L_k)|_{\l_{im}=1/2}>0.
\]
\end{rem}

\begin{lemma}\label{lem:pos_def_hess}
There exists $0<\eps_2\leq\eps_1$ (not depending on $\b$) such that
if $\ep<\ep_2$, $(u_{11},\dots,u_{kh_k})\in\tilde\Xh_\ep$ and
$\nabla\Phi(\bar\L_1,\dots,\bar\L_k)=0$ with $\bar\l_{im}>1/2$ then
the Hessian matrix
\[
D^2\Phi(\bar\L_1,\dots,\bar\L_k)\text{ is negative definite.}
\]
\end{lemma}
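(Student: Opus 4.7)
My plan is to compute the Hessian $D^2\Phi(\bar\Lambda)$ entry by entry and show that the associated quadratic form $Q(X) := X^T D^2\Phi(\bar\Lambda) X$ satisfies $Q(X) \leq -c|X|^2$ for some $c>0$, uniformly in $\beta$, once $\epsilon$ is small enough. Writing $V_i = \bar\Lambda_iU_i$ and $\psi_i = \sum_m x_{im}u_{im}$ for $X=(x_{im})$, direct differentiation of \eqref{eq:grad_Phi} yields
\[
Q(X) = \sum_i \|\psi_i\|^2 - 3\sum_i \int_{\R^N}V_i^2\psi_i^2\,\de x + \beta\sum_{i\neq k}\int_{\R^N} \psi_i^2 V_k^2\,\de x + 2\beta\sum_{i\neq j}\int_{\R^N}V_iV_j\psi_i\psi_j\,\de x.
\]

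The central idea is to use the criticality condition $\partial_{\lambda_{im}}\Phi(\bar\Lambda)=0$, multiplied by $x_{im}^2/\bar\lambda_{im}$ (which is legitimate since $\bar\lambda_{im}>1/2$) and summed, in order to eliminate the large positive diagonal $\beta$-contribution. Combining this with the scalar expansion
\[
\sum_i\|\psi_i\|^2 - 3\sum_i\int_{\R^N}V_i^2\psi_i^2\,\de x = \sum_{i,m}(1-3\bar\lambda_{im}^2)x_{im}^2\int_{\R^N} u_{im}^4\,\de x + o_\epsilon(1)|X|^2,
\]
where $o_\epsilon(1)\to 0$ as $\epsilon\to 0$ thanks to Remark \ref{lem:base} and the approximate Nehari identity $\|u_{im}\|^2 = \int u_{im}^4 + o_\epsilon(1)$, the substitution should lead to
\[
Q(X) = -2\sum_{i,m}\bar\lambda_{im}^2 x_{im}^2\int_{\R^N} u_{im}^4\,\de x + o_\epsilon(1)|X|^2 + \mathcal{R}(X),
\]
where $\mathcal{R}(X)$ collects the remaining $\beta$-cross-terms, namely $\beta\int u_{im}u_{in}\sum_{k\neq i}V_k^2$ with $m\neq n$ and $2\beta\int V_iV_j\psi_i\psi_j$. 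By Remark \ref{rem:bdd_three_addends} together with the pointwise bound $u_{im}\leq V_i/\bar\lambda_{im}\leq 2V_i$, each such $\beta$-integral is uniformly bounded in $\beta$.

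The main obstacle is that individual off-diagonal $\beta$-entries are only bounded and not small, so one cannot dispose of $\mathcal{R}(X)$ by brute entrywise control. My plan is to show that, after the criticality-driven substitution is carried out, the surviving $\beta$-terms group into pointwise quadratic expressions whose integrands vanish to first order in $\epsilon$ because of the $L^2$-smallness $\|u_{im}u_{in}\|_{L^2} = O(\epsilon)$ for $m\neq n$ (derived by writing $u_{im}=w_{\tilde\s^{-1}(im)}+(u_{im}-w_{\tilde\s^{-1}(im)})$, exploiting the disjoint supports of the limiting bumps, and applying the $H^1\hookrightarrow L^4$ embedding as in the proof of Proposition \ref{prop:nehari_mixed}). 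Granted this, $\mathcal{R}(X) = o_\epsilon(1)|X|^2$, and since $\bar\lambda_{im}^2 > 1/4$ and $\int u_{im}^4 \geq C_1^2$, one obtains $Q(X) \leq -\tfrac12 C_1^2 |X|^2 + o_\epsilon(1)|X|^2$. Choosing $\epsilon_2\leq\epsilon_1$ so that the $o_\epsilon(1)$ is smaller than $\tfrac14 C_1^2$ completes the proof.
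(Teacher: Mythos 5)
Your computation of $Q(X)$ and your use of the criticality condition $\nabla\Phi(\bar\L_1,\dots,\bar\L_k)=0$ to rewrite the diagonal entries are sound and match the first half of the paper's argument. The gap is in the treatment of the surviving $\beta$-terms $\mathcal{R}(X)$. You propose to conclude $\mathcal{R}(X)=o_\eps(1)|X|^2$ from the smallness $\|u_{im}u_{in}\|_{L^2}=O(\eps)$ for $m\neq n$, but every one of these terms carries a factor $\beta$, and the lemma requires $\eps_2$ to be chosen \emph{independently of} $\beta$. An estimate of the form $\beta\int_{\R^N} u_{im}u_{in}(\bar\L_jU_j)^2\,dx\leq \beta\cdot O(\eps)$ is useless here: for fixed $\eps$ it blows up as $\beta\to\infty$. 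The only $\beta$-uniform information available (Remark \ref{rem:bdd_three_addends} together with the pointwise bound $u_{im}\leq 2\,\bar\L_iU_i$ that you invoke) is that each such integral is \emph{bounded} by a fixed constant of order $c_\infty+1$; boundedness is not smallness, and a bounded positive off-diagonal contribution of that size can perfectly well destroy the negative definiteness coming from the $-2C_1^2|X|^2$ diagonal part, since nothing forces that constant to be smaller than $2C_1^2$. So the step ``Granted this, $\mathcal{R}(X)=o_\eps(1)|X|^2$'' is unjustified and, as far as one can tell, false.

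The missing idea is that one should \emph{not} try to make the $\beta$-block small at all. After the criticality substitution the paper writes $D^2\Phi(\bar\L_1,\dots,\bar\L_k)=-2A+B+\beta\int_{\R^N}C(x)\,dx$, where $A$ is diagonal and uniformly positive definite, $B=O(\eps)$ uniformly in $\beta$, and --- this is the crucial point --- the matrix-valued integrand $C(x)$, which collects \emph{all} the $\beta$-terms (the negative diagonal ones $u_{im}(u_{im}-3\bar\L_iU_i/\bar\l_{im})\sum_{j\neq i}(\bar\L_jU_j)^2$ together with the positive off-diagonal ones), is shown to be \emph{negative semidefinite pointwise for every $x$}. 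This is a purely algebraic fact: after factoring out positive weights, each two-component block of $C(x)$ is congruent to the sum of a negative semidefinite matrix with constant blocks and $3$ times a block-diagonal matrix built from $D=\bigl(1-\delta_{mn}/\a_m\bigr)_{mn}$ with $\a_m>0$, $\sum_m\a_m=1$, for which $D\xi\cdot\xi=\bigl(\sum_m\xi_m\bigr)^2-\sum_m\xi_m^2/\a_m\leq0$ by Cauchy--Schwarz. Hence $\beta\int_{\R^N}C(x)\,dx$ is negative semidefinite no matter how large $\beta$ is, and negative definiteness of the Hessian follows from $-2A+B$ alone for $\eps$ small. You would need to incorporate this pointwise sign argument (or an equivalent mechanism that does not ask the $\beta$-terms to be small) to close your proof.
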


\begin{proof}
From \eqref{eq:grad_Phi} we obtain
\[
\begin{split}
 \partial^2_{\l_{im}\l_{jn}}\Phi &= \b\int_{\R^N}2(\L_iU_i) (\L_jU_j) u_{im} u_{jn}\,dx \qquad
  \text{if }j\neq i\\
 \partial^2_{\l_{im}\l_{in}}\Phi &= \left\langle u_{im},u_{in}\right\rangle - 3\int_{\R^N}
 (\L_{i}U_i)^2u_{im}u_{in}\,dx+\b\int_{\R^N} u_{im}u_{in}\sum_{j\neq i}(\L_jU_j)^2\,dx
\end{split}
\]
(we computed $\partial^2_{\l^2_{im}}\Phi$ in
\eqref{eq:der_sec_Phi}). For $m\neq n$ we write
\[
M^i_{mn}=\left\langle u_{im},u_{in}\right\rangle
-\int_{\R^N}(\bar\L_iU_i)^2 u_{im}u_{in}\,dx
\]
in such a way that
\[
\left\langle\bar\L_i U_i,u_{im}\right\rangle
-\int_{\R^N}(\bar\L_iU_i)^3u_{im}\,dx= \bar\l_{im}\left[\left\|
u_{im}\right\|^2 - \int_{\R^N}(\bar\L_i U_i)^2 u_{im}^2\,dx
+\sum_{n\neq m}\frac{\bar\l_{in}}{\bar\l_{im}}M^i_{mn}\right].
\]
Since $\nabla\Phi(\bar\L_1,\dots,\bar\L_k)=0$ we have, for every
$i$, $m$,
\[
\int_{\R^N}(\bar\L_iU_i)^2u_{im}^2= \|u_{im}\|^2+ \sum_{n\neq
m}\frac{\bar\l_{in}}{\bar\l_{im}}M^i_{mn}+
\frac{\b}{\bar\l_{im}}\int_{\R^N}(\bar\L_iU_i)u_{im}\sum_{j\neq
 i}(\bar\L_jU_j)^2\,dx.
\]
Substituting we obtain
\begin{equation}\label{eq:hess1}
 \partial^2_{\l_{im}\l_{im}}\Phi(\bar\L_1,\dots,\bar\L_k)=
 -2\underbrace{\|u_{im}\|^2}_{(A)}
 -\underbrace{3\sum_{n\neq m}\frac{\bar\l_{in}}{\bar\l_{im}}M^i_{mn}}_{(B)}
 +\b\int_{\R^N}\underbrace{u_{im}\left(u_{im}-\frac{3}{\bar\l_{im}}\bar\L_iU_i\right)
 \sum_{j\neq i}(\bar\L_jU_j)^2}_{(C)}\,dx,
\end{equation}
and, for $n\neq m$,
\begin{equation}\label{eq:hess2}
 \partial^2_{\l_{im}\l_{in}}\Phi(\bar\L_1,\dots,\bar\L_k) =
 \underbrace{M^i_{mn}-2\int_{\R^N}(\bar\L_iU_i)^2u_{im}u_{in}\,dx}_{(B)}
 +\b\int_{\R^N}\underbrace{u_{im}u_{in}\sum_{j\neq i}(\bar\L_jU_j)^2}_{(C)}\,dx.
\end{equation}
As a consequence we can split $D^2\Phi$ as
\[
D^2\Phi(\bar\L_1,\dots,\bar\L_k)=-2A+B+\b\int_{\R_N} C(x)\,dx,
\]
where each of the matrices $A$, $B$, and $C$ contains the
corresponding terms in \eqref{eq:hess1} and \eqref{eq:hess2}, and
$C$ also contains the terms appearing in $\partial^2_{\l_{im}
\l_{jn}}\Phi$, $i\neq j$. First of all, using Remark  \ref{lem:base}
and the boundedness of the $\bar\l_{im}$'s, we observe that $A$ is
diagonal and strictly positive definite, independent of $\eps$,
while $B$ is arbitrary small as $\eps$ goes to zero (not depending
on $\b$). We will show that $C(x)$ is negative semidefinite for
every $x$: this will conclude the proof.

To do that, we will only use that $u_{im}\geq0$ and
$\sum_m\bar\l_{im}u_{im}=\bar\L_iU_i$, therefore, without loss of
generality, we can put $\bar\l_{im}=1$ for every
$(i,m)$\footnote{replacing $\bar\l_{im}u_{im}$ with $u_{im}$ and
$\bar\L_iU_i$ with $U_i$.}. The matrix $C(x)$ can be written as the
sum of matrices $C_{ij}(x)$ where only two components, say $U_i$ and
$U_j$, interact. Such matrices, for $x$ fixed, contain many null
blocks, corresponding both to the interaction with the other
components $U_p$, $p\neq i,j$, and to the pulses of $U_i$ and $U_j$
vanishing in $x$. All those null blocks do not incide on the
semidefiniteness of $C_{ij}$; up to the null terms, $C_{ij}$ writes
like
\[
\left(
  \begin{array}{cc}
     \begin{array}{ccc}
      U_j^2(u_{i1}-3U_i)u_{i1} & \cdots & U_j^2u_{i1}u_{ih_i} \\
      \vdots & \ddots & \vdots \\
      U_j^2u_{ih_i}u_{i1} & \cdots & U_j^2(u_{ih_i}-3U_i)u_{ih_i}
     \end{array}
    \vline&
     \begin{array}{ccc}
      2U_iU_ju_{i1}u_{j1} & \cdots & 2U_iU_ju_{i1}u_{jh_j} \\
      \vdots & \ddots & \vdots \\
      2U_iU_ju_{ih_i}u_{j1} & \cdots & 2U_iU_ju_{ih_i}u_{jh_j}
     \end{array}
    \\ \hline\hfill
     \begin{array}{ccc}
      2U_iU_ju_{i1}u_{j1} & \cdots & 2U_iU_ju_{ih_i}u_{j1} \\
      \vdots & \ddots & \vdots \\
      2U_iU_ju_{jh_j}u_{i1} & \cdots & 2U_iU_ju_{ih_i}u_{jh_j}
     \end{array}
    \hfill\vline&
     \begin{array}{ccc}
      U_i^2(u_{j1}-3U_j)u_{j1} & \cdots & U_i^2u_{j1}u_{jh_j} \\
      \vdots & \ddots & \vdots \\
      U_i^2u_{jh_j}u_{j1} & \cdots & U_i^2(u_{jh_j}-3U_j)u_{jh_j}
     \end{array}
  \end{array}
\right)
\]
(where every term is strictly positive), which has the same
signature than
\[
\left(
  \begin{array}{cc}
    \begin{array}{ccc}
      1-3U_i/u_{i1} & \cdots & 1 \\
      \vdots & \ddots & \vdots \\
      1 & \cdots & 1-3U_i/u_{ih_i}
    \end{array}
    \vline& 2\\
    \hline\smallskip
    \hfill 2
    \hfill\vline&
    \begin{array}{ccc}
      1-3U_j/u_{j1} & \cdots & 1 \\
      \vdots & \ddots & \vdots \\
      1 & \cdots & 1-3U_j/u_{jh_j}
    \end{array}
  \end{array}
\right)
\]
(we mean that in the two blocks every term is equal to 2). The last
matrix can be seen as the sum
\[ 2\left(
  \begin{array}{cc}
    -1\ \vline& 1\\
    \hline\smallskip
    \hfill 1\
    \hfill\vline&-1
  \end{array}
\right) + 3\left(
  \begin{array}{cc}
    \begin{array}{ccc}
      1-1/\a_1 & \cdots & 1 \\
      \vdots & \ddots & \vdots \\
      1 & \cdots & 1-1/\a_{h_i}
    \end{array}
    \vline& 0\\
    \hline\smallskip
    \hfill 0
    \hfill\vline&
    \begin{array}{ccc}
      1-1/\b_1 & \cdots & 1 \\
      \vdots & \ddots & \vdots \\
      1 & \cdots & 1-1/\b_{h_j}
    \end{array}
  \end{array}
\right),
\]
where $\a_m=u_{im}/U_i$, $\b_m=u_{jm}/U_j$, in such a way that
$\sum\a_m=\sum\b_m=1$. It is easy to see that the first addend is
negative semidefinite, so the last thing we have to prove is that
$\sum_m\a_m=1$, $\a_m>0$, implies that
\[
D= \left(
\begin{array}{ccc}
      1-1/\a_1 & \cdots & 1 \\
      \vdots & \ddots & \vdots \\
      1 & \cdots & 1-1/\a_{h}
    \end{array}
\right)
\]
is negative semidefinite. Let $\xi=(\xi_1,\dots,\xi_h)\in\R^h$. Then
it is easy to prove that
\[
\sum_{m=1}^h\frac{\xi_m^2}{\a_m^2}\geq h|\xi|^2,\qquad \text{ thus
}\qquad D\xi\cdot\xi\leq\left(\sum_{m=1}^h\xi_m\right)^2-h|\xi|^2,
\]
that is trivially non positive for every $h$ and $\xi$.
\end{proof}

\begin{prop}\label{prop:unique_lambda}
Let $\ep<\ep_2$ in such a way that all the previous results hold.
Then, for every $(u_{11},\dots,u_{kh_k})\in\tilde\Xh_\ep$ there
exists one and only one choice
\[
\bar\L_i=\bar\L_i(u_{11},\dots,u_{kh_k})
\]
such that
\[
J_\b\left(\bar\L_1(u_{11},\dots,u_{kh_k})U_1,\dots,\bar\L_k(u_{11},
\dots,u_{kh_k})U_k\right)=M_\b(u_{11},\dots,u_{kh_k}).
\]
Moreover, each $\bar\L_i$ is well defined and of class $C^1$ on a
neighborhood $N(\tilde\Xh_\ep)$ of $\tilde\Xh_\ep$.
\end{prop}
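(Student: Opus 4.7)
The plan is to combine the preceding lemmas to trap every candidate maximizer in a single bounded, convex open region where the Hessian is uniformly negative definite, deduce uniqueness from a Poincar\'e--Hopf index count, and then apply the implicit function theorem to obtain the $C^1$--dependence on $(u_{11},\dots,u_{kh_k})$.

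The first step is to fix the reference region
\[
\Omega=\left\{\L=(\L_1,\dots,\L_k)\in\R^h:\ \l_{im}>\tfrac{1}{2}\text{ for every }(i,m),\ \textstyle\sum_i|\L_i|^2<R^2\right\},
\]
with $R$ as in Lemma \ref{lem:R}, where $h=\sum h_i$. This $\Omega$ is convex, hence contractible, so $\chi(\bar\Omega)=1$. Existence of at least one maximizer $\bar\L\in\Omega$ with $\nabla\Phi_\b(\bar\L)=0$ is already given by Lemma \ref{lem:achieve_M} combined with Lemma \ref{lem:R}(1) and Lemma \ref{lem:lambda>1/2}.

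For uniqueness, the decisive observation is that $\nabla\Phi_\b$ points strictly \emph{inward} on $\partial\Omega$: on the face $\{\l_{im}=1/2\}$, Remark \ref{rem:1/2} gives $\partial_{\l_{im}}\Phi_\b>0$ while the outward normal is $-e_{im}$; on the face $\{\sum_i|\L_i|^2=R^2\}$, Lemma \ref{lem:R}(2) gives $\nabla\Phi_\b\cdot\L<0$, which is the outward radial direction. In particular $\nabla\Phi_\b$ has no zero on $\partial\Omega$, so every critical point of $\Phi_\b$ lies in $\Omega$, and by Lemma \ref{lem:pos_def_hess} each such critical point is non--degenerate with negative definite Hessian. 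Hence the outward--pointing vector field $-\nabla\Phi_\b$ has only isolated zeros on $\bar\Omega$, each of Brouwer index $\operatorname{sign}\det(-D^2\Phi_\b)=(-1)^{2h}=+1$. By Poincar\'e--Hopf,
\[
\#\{\text{critical points of }\Phi_\b\text{ in }\Omega\}=\chi(\bar\Omega)=1,
\]
which gives uniqueness of $\bar\L=\bar\L(u_{11},\dots,u_{kh_k})$. (Alternatively, a mountain--pass argument works: two distinct strict local maxima would force an additional saddle critical point between them, contradicting the negative definiteness of $D^2\Phi_\b$ at every critical point in $\Omega$.)

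For the regularity claim, note that $\Phi_\b(\L;u_{11},\dots,u_{kh_k})$ is a polynomial in $\L$ whose coefficients are polynomials in the $H^1$--inner products and $L^4$--integrals of the $u_{im}$'s, so $\Phi_\b$ is jointly smooth in $(\L,u)\in\R^h\times(\spc)^h$. Since $\nabla_\L\Phi_\b(\bar\L;u)=0$ and $D^2_\L\Phi_\b(\bar\L;u)$ is invertible (negative definite, by Lemma \ref{lem:pos_def_hess}), the implicit function theorem applied to $G(\L,u)=\nabla_\L\Phi_\b(\L,u)$ produces, for each $u^0\in\tilde\Xh_\ep$, a $C^1$ solution $u\mapsto\bar\L(u)$ on a neighborhood of $u^0$. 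Uniqueness from the previous step forces these local branches to coincide on overlaps, yielding the desired $C^1$--map on a neighborhood $N(\tilde\Xh_\ep)$. I expect the main obstacle to be uniqueness: one has to verify that the region $\Omega$ really captures \emph{all} critical points (handled by Lemma \ref{lem:lambda>1/2}, Lemma \ref{lem:R}, and Remark \ref{rem:1/2}) and then invoke a global topological input such as Poincar\'e--Hopf or mountain pass, since the negative definiteness of the Hessian is, by itself, only a local statement.
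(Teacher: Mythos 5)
Your proposal is correct and follows essentially the same route as the paper: the authors also confine all critical points to the box $D=\{\sum_i|\L_i|^2<R^2,\ \l_{im}>1/2\}$ via Lemma \ref{lem:R} and Remark \ref{rem:1/2}, use the inward-pointing gradient on $\partial D$ to compute $\deg(\nabla\Phi,0,D)=(-1)^h$ (your Poincar\'e--Hopf count with the sign-normalized field $-\nabla\Phi$ is the same computation), conclude uniqueness from the non-degeneracy supplied by Lemma \ref{lem:pos_def_hess}, and finish with the implicit function theorem exactly as you do.
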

\begin{proof}
To start with we will show, via a topological degree argument, that
$(\bar\L_1,\dots,\bar\L_k)$ is uniquely defined on $\tilde\Xh_\ep$.
Indeed, consider the set
\[
D=\left\{(\L_1,\dots,\L_k)\in\R^h:\,\sum_{i}|\L_i|^2<R^2,\,\l_{im}>1/2\right\}.
\]
By Lemma \ref{lem:R} and Remark \ref{rem:1/2} we know that
$\nabla\Phi$ points inward on $\partial D$. As a consequence
$-\nabla\Phi$ is homotopically equivalent to a translation of the
identity map, and
\[
\deg\left(\nabla\Phi,0,D\right)=(-1)^h.
\]
On the other hand, such degree must be equal to the sum of the local
degrees of all the critical points of $\Phi$ in $D$: since these
points are all non degenerate maxima (by Lemma
\ref{lem:pos_def_hess}), and they have local degree $(-1)^h$, we
conclude that there is only one critical point of $\Phi$ in $D$, and
it must be the global maximum point. Therefore the maps
$\bar\L_i(u_{11},\dots,u_{kh_k})$ are well defined in
$\tilde\Xh_\ep$. Moreover, they are implicitly defined by
\[
\nabla\Phi(\bar\L_1,\dots,\bar\L_k)=0,
\]
thus to conclude we can apply the Implicit Function Theorem in a
neighborhood of any point of $\tilde\Xh_\ep$: indeed, $\nabla\Phi$
is a $C^1$ map (both in the $\l$--variables and in the
$u$--variables); moreover, its differential with respect to the
$\l$--variables is invertible by Lemma \ref{lem:pos_def_hess} (it is
simply $D^2\Phi$).
\end{proof}

We observe that, even if $(u_{11},\dots,u_{kh_k})$ belongs to
$\tilde\Xh_\ep$, nevertheless this might not be true for the
corresponding $(\bar\l_{11}u_{11},\dots,\bar\l_{kh_k}u_{kh_k})$. At
this point we can only state a weaker property for those elements of
$\tilde\Xh_\ep$ with the corresponding $U_i$'s having disjoint
supports.

\begin{lemma}\label{lem:restrict_ep_false}
Let $\ep<\ep_2$ in such a way that all the previous results hold,
and $(v_{11},\dots,v_{kh_k})\in\tilde\Xh_\ep$ be such that
\[
V_i\cdot V_j=0\quad\text{almost everywhere, for every }i,j.
\]
Then there exists $\delta=\delta(\ep)$ (not depending on $\b$) such
that
$(\bar\l_{11}v_{11},\dots,\bar\l_{kh_k}v_{kh_k})\in\tilde\Xh_{\delta}$.\footnote{Here
and in the following
$\bar\l_{im}=\bar\l_{im}(v_{11},\dots,v_{kh_k})$, according to
Proposition \ref{prop:unique_lambda}.} Moreover, $\d$ goes to 0 as
$\ep$ does.
\end{lemma}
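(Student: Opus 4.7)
The plan is to run the argument in two stages: first exploit the hypothesis $V_iV_j=0$ a.e.\ to reduce $M_\beta$ to a purely scalar object on the tuples of interest, and second use the critical point equation together with the estimates of Remark \ref{lem:base} and the Nehari identity for the $w_l$'s to show that each $\bar\l_{im}$ is $1 + O(\ep)$.

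First I would note that the hypothesis $V_iV_j=0$ a.e.\ implies $v_{im}v_{jn}=0$ a.e.\ whenever $i\neq j$ (the pulse supports nest inside their component supports, which are disjoint in measure). Consequently the $\beta$-interaction term in $J_\beta(\L_1V_1,\dots,\L_kV_k)$ vanishes for every choice of the $\L_i$'s, so that $J_\beta(\L_1V_1,\dots,\L_kV_k)=J^*(\sum_{i,m}\l_{im}v_{im})$. Through the bijective change of variable $\nu_{im}=\mu_{im}\bar\l_{im}$ on the positive orthant (well-defined since $\bar\l_{im}>1/2$ by Lemma \ref{lem:lambda>1/2}), this gives $M_\beta(\bar\l_{11}v_{11},\dots,\bar\l_{kh_k}v_{kh_k})=M_\beta(v_{11},\dots,v_{kh_k})<c_\infty+\min(1,1/\beta)$, so the sublevel condition defining $\tilde\Xh_\delta$ is automatically inherited by the rescaled tuple. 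It then remains only to verify the closeness condition defining $\Xh_\delta$.

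Next I would unpack $\partial_{\l_{im}}\Phi_\beta(\bar\L)=0$ from \eqref{eq:grad_Phi}. Since $\sum_{j\neq i}(\bar\L_jV_j)^2\equiv 0$ on $\supp v_{im}\subseteq\supp V_i$, the $\beta$-contribution drops out and the identity reduces to
\[
\bar\l_{im}\|v_{im}\|^2 + \sum_{n\neq m}\bar\l_{in}\langle v_{in},v_{im}\rangle = \int_{\R^N}\left(\sum_{n}\bar\l_{in}v_{in}\right)^{\!3}v_{im}\,dx.
\]
Let $W\in\Kah$ be a tuple realizing the $\Xh_\ep$ distance for $(v)$ and set $l=\tilde\s^{-1}(im)$. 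Writing each $v_{in}=w_{\tilde\s^{-1}(in)}+r_{in}$ with $\|r_{in}\|<\ep$ and using that $w_lw_{l'}\equiv 0$ for $l\neq l'$, every cross term indexed by some $n\neq m$ is $O(\ep)$ via Remark \ref{lem:base}, Hölder, and the Sobolev--Strauss Lemma \ref{lem:strauss}, with constants independent of $\beta$ thanks to the uniform bound $\bar\l_{in}\leq R$ from Lemma \ref{lem:R}. Collapsing, one obtains $\bar\l_{im}\|v_{im}\|^2 = \bar\l_{im}^3\int v_{im}^4\,dx + O(\ep)$. Substituting the expansions $\|v_{im}\|^2=\|w_l\|^2+O(\ep)$ and $\int v_{im}^4\,dx=\int w_l^4\,dx+O(\ep)$, invoking the Nehari identity $\|w_l\|^2=\int w_l^4\,dx\in[C_1^2,C_2^2]$ from Proposition \ref{prop:sing}, and dividing by $\bar\l_{im}\geq 1/2$, one reads off $\bar\l_{im}^2=1+O(\ep)$, hence $|\bar\l_{im}-1|\leq C\ep$ with $C$ independent of $\beta$.

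The conclusion would follow by the triangle inequality $\|\bar\l_{im}v_{im}-w_l\|\leq |\bar\l_{im}-1|\,\|v_{im}\|+\|v_{im}-w_l\|\leq C'\ep$; summing over $(i,m)$ gives $d_{\tilde\s}((\bar\l_{im}v_{im}),W)\leq\delta(\ep)$ with $\delta(\ep)=O(\ep)\to 0$. I expect the main obstacle to lie in the bookkeeping of the second step: one must control not only the scalar products $\langle v_{in},v_{im}\rangle$ but every mixed contribution to $\int(\sum_n\bar\l_{in}v_{in})^3v_{im}\,dx$ in which some index differs from $m$. However, once each $v_{in}$ is perturbatively expanded around its reference $w_{\tilde\s^{-1}(in)}$ and one uses the disjointness of the $w_l$'s, each such mixed integral is $O(\ep)$ via Hölder in $L^4$ and Lemma \ref{lem:strauss} — crucially with constants free of $\beta$, precisely because the $\beta$-term was annihilated at the outset by the disjointness hypothesis.
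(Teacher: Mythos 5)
Your proof is correct, and its first stage (using $V_iV_j=0$ to annihilate the $\b$--interaction, noting $M_\b(\bar\l_{11}v_{11},\dots,\bar\l_{kh_k}v_{kh_k})=M_\b(v_{11},\dots,v_{kh_k})$ so that only the distance condition needs checking) coincides with the paper's. Where you genuinely diverge is the key step. The paper observes that the $\bar\l_{im}$'s are implicitly defined by the $\b$--free system $\langle\sum_n\bar\l_{in}v_{in},v_{im}\rangle=\int(\sum_n\bar\l_{in}v_{in})^3v_{im}\,dx$, that at $v=W\in\Kah$ this system has the unique solution $\bar\l_{in}\equiv1$, and then concludes $\bar\l_{im}\to1$ by a one--line appeal to the Implicit Function Theorem. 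You instead extract the quantitative bound $|\bar\l_{im}-1|\le C\ep$ by a direct perturbative expansion of the same system around $W$, discarding the $O(\ep)$ cross terms via Remark \ref{lem:base} and Lemma \ref{lem:strauss}, and then solving the resulting scalar relation $\bar\l_{im}\|w_l\|^2=\bar\l_{im}^3\int w_l^4\,dx+O(\ep)$ using the Nehari identity together with the lower bounds $\bar\l_{im}\ge1/2$ and $\|w_l\|\ge C_1$. Your route is more elementary and buys an explicit rate $\d(\ep)=O(\ep)$ rather than mere continuity; it also makes transparent the uniformity in $W\in\Kah$ and in the tuple $(v_{im})$, which the paper's invocation of the IFT leaves to the reader (there it rests on the nondegeneracy supplied by Lemma \ref{lem:pos_def_hess} and the compactness of $\Kah$). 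Both arguments are valid and, as required, produce a $\d(\ep)$ independent of $\b$.
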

\begin{proof}
To start with, we observe that $v_{im}\geq0$ implies
$\bar\l_{im}v_{im}\geq0$, and that
$M_\b(u_{11},\dots,u_{kh_k})=M_\b(\bar\l_{11}u_{11},\dots,\bar\l_{kh_k}u_{kh_k})$.
As a consequence, if we prove that
\[
\sum_{i,m}\|v_{im}-w_{\tilde\s^{-1}(im)}\|^2<\ep^2\quad\implies\quad
\sum_{i,m}\|\bar\l_{im}v_{im}-w_{\tilde\s^{-1}(im)}\|^2<\delta^2,
\]
with $\d$ vanishing when $\ep$ does, we have finished. By assumption
we have that $\b\int(\bar\L_iV_i)v_{im}(\bar\L_jV_j)^2=0$ for every
choice of the indexes, so that the functions $\bar\l$ are implicitly
defined by
\[
\left\langle\sum_n\bar\l_{in}v_{in},v_{im}\right\rangle-\int_{\R^N}\left(\sum_n
\bar\l_{in}v_{in}\right)^3v_{im}\,dx=0.
\]
On the other hand, by the definition of $\Kah$, we know that
\[
\left\langle\sum_n\bar\l_{in}w_{\tilde\s^{-1}(in)},w_{\tilde\s^{-1}(im)}
\right\rangle-\int_{\R^N}\left(\sum_n\bar\l_{in}w_{\tilde\s^{-1}(in)}
\right)^3w_{\tilde\s^{-1}(im)}\,dx=0\,\iff\,\bar\l_{in}=1\text{ for
every }n.
\]
But then our claim directly follows from the Implicit Function
Theorem.
\end{proof}

\section{Estimates for $\ep$ fixed and $\b$ large}

From now on we choose $\bar\ep>0$ in such a way that, for every
$\ep<\bar\ep$, it holds
\[
\ep<\ep_2\quad\text{and}\quad\d<\ep_2
\]
with $\ep_2$ as in Proposition \ref{prop:unique_lambda} and
$\d=\d(\ep)$ as in Lemma \ref{lem:restrict_ep_false}. As we said,
$\bar\ep$ do not depend on $\b$. In the following $\ep$ and $\d$ are
considered fixed as above.

Since in the following we will let $\b$ move, we observe that, as we
already remarked, the set $\tilde\Xh_\ep=\tilde\Xh_{\ep,\b}$ also
depends on $\b$, since the functions inside satisfy $M_\b<
c_\infty+1/\b$.
\begin{rem}
If $\b_1\leq\b_2$, then for any $(u_{11},\dots,u_{kh_k})\in\Xh_\ep$
and any choice of the $\L_i$'s, it holds
$J_{\b_1}(\L_1U_1,\dots,\L_kU_k)\leq
J_{\b_2}(\L_1U_1,\dots,\L_kU_k)$. Passing to the supremum we obtain
\[
\b_1\leq\b_2 \implies
\tilde\Xh_{\ep,\b_2}\subset\tilde\Xh_{\ep,\b_1}.
\]
\end{rem}
In the following we will deal with sequence of $h$--tuples in
$\tilde\Xh_\ep$, with increasing $\b$. For this reason, we start
this section with a general result about some convergence property
for such sequences.

\begin{lemma}\label{lem:weak_closure_infty}
Let the sequence $\b_s\to+\infty$, $s\in\N$, and let us consider a
sequence of $h$--tuples
\[
(u^s_{11},\dots,u^s_{kh_k})\in \tilde\Xh_{\ep,\b_s}.
\]
Then, up to a subsequence, $u^s_{im}\to u^*_{im}$, strongly in
$\spc$, for every $(i,m)$. Moreover
\[
U^*_i\cdot U^*_j\equiv0\text{ for }i\neq j,\quad\text{ and
}(u^*_{11},\dots,u^*_{kh_k})\in \tilde\Xh_{\ep,\b}\text{ for every
}\b.
\]
Finally, writing $\bar\l^*_{im}=\bar\l_{im}(u^*_{11},\dots,
u^*_{kh_k})$, we have that $\bar\l_{im}^s\to\bar\l_{im}^*$ and
\[
J_{\b}(\L^*_1U_1^*,\dots,\L^*_kU_k^*)=c_\infty\quad\text{ for every
}\b.
\]
\end{lemma}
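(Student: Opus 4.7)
The plan is to extract subsequential weak limits by compactness, use boundedness of the repulsive term multiplied by $\b_s \to \infty$ to deduce segregation of the limit, and finally use an extremality/energy argument that simultaneously promotes weak to strong convergence and identifies the limiting energy with $c_\infty$.

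First I would note that by Remark~\ref{lem:base}(2) each sequence $\{u^s_{im}\}_s$ is uniformly bounded in $\spc$, so by Lemma~\ref{lem:strauss}, up to a subsequence, $u^s_{im} \rightharpoonup u^*_{im}$ weakly in $\spc$ and strongly in $L^4(\R^N)$. The multipliers $\bar\L^s$, well-defined by Proposition~\ref{prop:unique_lambda} and satisfying $1/2 < \bar\l^s_{im} \leq R$ by Lemmas~\ref{lem:R} and~\ref{lem:lambda>1/2}, subconverge to some $\bar\L^\circ$ with $\bar\l^\circ_{im} \geq 1/2$. By compactness of $\Kah$ (Proposition~\ref{prop:sing}), the witnesses $W^s$ satisfying $d_{\tilde\s}(u^s,W^s)<\ep$ converge strongly to some $W^*\in\Kah$, and lower semicontinuity gives $d_{\tilde\s}(u^*,W^*)\leq\ep$. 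Next, Remark~\ref{rem:bdd_three_addends} bounds $\b_s \sum_{i\neq j} \int (\bar\L^s_i U^s_i)^2 (\bar\L^s_j U^s_j)^2 \, dx$ uniformly in $s$; since $\b_s \to \infty$, the integral itself tends to zero, and strong $L^4$ convergence together with $\bar\l^\circ_{im} \geq 1/2$ forces $U^*_i U^*_j \equiv 0$ almost everywhere for $i \neq j$.

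Segregation makes $\Phi_\b(\,\cdot\,; u^*)$ independent of $\b$; let $\bar\L^*$ be its unique maximizer (Proposition~\ref{prop:unique_lambda}). Proposition~\ref{prop:nehari_mixed} applied to $u^*$ then yields $M_\b(u^*) = J^*(\sum_{i,m}\bar\l^*_{im} u^*_{im}) \geq c_\infty$ for every $\b$. Testing the supremum defining $M_{\b_s}(u^s)$ against $\bar\L^*$ gives
\[
c_\infty + \tfrac{1}{\b_s} \;>\; M_{\b_s}(u^s) \;\geq\; \Phi_{\b_s}(\bar\L^*; u^s);
\]
passing to $\liminf$ and using weak lower semicontinuity of $\|\cdot\|^2$, strong $L^4$ convergence, and nonnegativity of the $\b_s$-term, I obtain $\liminf_s \Phi_{\b_s}(\bar\L^*; u^s) \geq \Phi_\b(\bar\L^*; u^*) = M_\b(u^*) \geq c_\infty$. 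All inequalities collapse to equalities: $M_\b(u^*)=c_\infty$ for every $\b$, which gives $u^*\in\tilde\Xh_{\ep,\b}$ for every $\b$ and the final displayed identity.

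The equality case above forces $\|\bar\L^*_i U^s_i\| \to \|\bar\L^*_i U^*_i\|$, which, together with weak convergence, promotes the aggregates $\bar\L^*_i U^s_i$ to strong convergence in $\spc$. To descend to individual pulses I would exploit the decomposition of $\R^N$ into the disjoint annuli $\supp w^*_l$ from Proposition~\ref{prop:sing}, together with the uniform bound $\|u^s_{im} - w^s_{\tilde\s^{-1}(im)}\| < \ep$ and the strong convergence $W^s \to W^*$, to localize the strong $H^1$ convergence annulus by annulus; combined with the lower bound $\bar\l^*_{im} > 1/2$, this yields $u^s_{im} \to u^*_{im}$ strongly. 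Finally, passing to the limit in $\nabla_\L \Phi_{\b_s}(\bar\L^s; u^s) = 0$ and invoking uniqueness of the maximizer for $\Phi_\b(\,\cdot\,; u^*)$ gives $\bar\L^\circ = \bar\L^*$. The main obstacle I anticipate is precisely this last descent from aggregates to individual pulses: pulses sharing the first index are only approximately orthogonal (their scalar product is $O(\ep)$, not vanishing in $s$), so the annular-support structure of the limit $W^*$ must be used in an essential way to localize the convergence.
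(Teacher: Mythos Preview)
Your strategy coincides with the paper's: extract weak/$L^4$ limits, deduce segregation from the uniform bound on the $\b_s$--coupling (Remark~\ref{rem:bdd_three_addends}), then sandwich via Proposition~\ref{prop:nehari_mixed} and the chain
\[
c_\infty+\tfrac{1}{\b_s}>\Phi_{\b_s}(\bar\L^s;u^s)\geq\Phi_{\b_s}(\bar\L^*;u^s)\geq\Phi_\b(\bar\L^*;u^*)+o(1)\geq c_\infty,
\]
forcing all inequalities to collapse. The paper concludes in one line that this collapse yields both $\bar\l^s_{im}\to\bar\l^*_{im}$ and $u^s_{im}\to u^*_{im}$ strongly, without further justification.

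You are right to be cautious: the collapse of the last inequality gives only $\|\bar\L^*_iU^s_i\|\to\|\bar\L^*_iU^*_i\|$, hence strong convergence of the \emph{aggregates} $\bar\L^*_iU^s_i$, not immediately of the individual $u^s_{im}$. However, the annular localization you propose does not close this gap. On $A_l=\supp w^*_l$ with $l=\tilde\s^{-1}(i,m_0)$, the foreign pulses satisfy, for $m\neq m_0$,
\[
\bigl\|u^s_{im}|_{A_l}\bigr\|\leq\bigl\|u^s_{im}-w^s_{\tilde\s^{-1}(im)}\bigr\|+\bigl\|w^s_{\tilde\s^{-1}(im)}|_{A_l}\bigr\|<\ep+o_s(1);
\]
this is $O(\ep)$ \emph{uniformly in $s$}, not $o_s(1)$, so restricting the aggregate convergence to $A_l$ still does not isolate $u^s_{im_0}$. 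Likewise, your plan to pass to the limit in $\nabla_\L\Phi_{\b_s}(\bar\L^s;u^s)=0$ tacitly requires strong $H^1$ convergence of the $u^s_{im}$ already (otherwise the term $\langle\bar\L^s_iU^s_i,u^s_{im}\rangle$ has no identified limit), so the order of your last two steps is circular as written. The paper's own proof is equally laconic at exactly this point and supplies no finer mechanism, so your argument is at the same level of rigor as the original; but the descent from aggregates to individual pulses remains, in both, a detail that is asserted rather than fully worked out.
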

\begin{proof}
By assumption we can find a sequence
$(w^s_{1},\dots,w^s_{l})\in\Kah$ such that
$\sum_{i,m}\|u^s_{im}-w^s_{l}\|^2<\ep^2$, where
$l=\tilde\s^{-1}(im)$. Since $\Kah$ is compact, we have that, up to
a subsequence, $w^s_l\to w_l^*$ strongly, and
$u^s_{im}\rightharpoonup u_{im}^*$ weakly in $\spc$ (since they are
bounded, independently on $\b$ (see Lemma \ref{lem:R}), also each
$\bar\l_{im}^\b$ converges to some number). By the compact immersion
of $\spc$ in $L^4(\R^N)$, we deduce that $u^s_{im}\to u_{im}^*$
strongly in $L^4(\R^N)$. We know by Remark
\ref{rem:bdd_three_addends} that
\[
\b_s\sum_{{i,j=1 \atop i\neq
j}}^k\int_{\R^N}\left(\bar\L^s_iU^s_i\right)^2\left(\bar\L^s_jU^s_j\right)^2\,dx\leq
C
\]
not depending on $\b$. Using the strong $L^4$--convergence and Lemma
\ref{lem:lambda>1/2} we conclude that $U^*_i\cdot U^*_j\equiv0\text{
for }i\neq j$. To prove that
$(u_{11}^*,\dots,u_{kh_k}^*)\in\tilde\Xh_\ep$ we observe that:
\begin{itemize}
 \item $u_{im}^*\geq0$ by the strong $L^4$--convergence;
 \item $\sum_{i,m}\|u_{im}^*-w_{l}^*\|^2<\ep^2$ by weak lower
  semicontinuity of $\|\cdot\|$;
 \item  finally, for every choice of the $\l_{im}$'s, we have that
  $\liminf\|\L_iU_i^s\|\geq\|\L_iU_i^*\|$, $\lim\int(\L_iU_i^s)^4=
  \int(\L_iU_i^*)^4$ and
  \[
  \liminf
  \b_s\int_{\R^N}\left(\bar\L_iU^s_i\right)^2\left(\bar\L_jU^s_j\right)^2\,dx\geq0
  =
  \b\int_{\R^N}\left(\bar\L_iU^*_i\right)^2\left(\bar\L_jU^*_j\right)^2\,dx
  \quad\text{ for every }\b,
  \]
 providing, for every $\b$,
  $J_\b(\L_{i}U_{i}^*) \leq\liminf
  J_{\b_s}(\L_{i}U_{i}^s)< c_\infty+1/\b_s$, that implies
  \[
  M_{\b}(u_{11}^*,\dots,u_{kh_k}^*)\leq c_\infty<c_\infty+\min(1,1/\b).
  \]
\end{itemize}
Thus $(u_{11}^*,\dots,u_{kh_k}^*)\in\tilde\Xh_{\ep,\b}$ and we can
write $\bar\l_{im}^*=\bar\l_{im}(u_{11}^*,\dots,u_{kh_k}^*)$. Now,
since $U_i^*\cdot U_j^*\equiv0$, by Proposition
\ref{prop:nehari_mixed} we know that
\[
J_{\b}(\bar\L^*_{1}U_{1}^*,\dots,\bar\L^*_{k}U_{k}^*)=
\sup_{\l_{im}>0}J^*\left(\sum_{i,m}\l_{im}v_{im}\right)\geq
c_\infty.
\]
On the other hand, for what we said,
\[
c_\infty+1/\b_s\geq
J_{\b_s}(\bar\L^s_{1}U_{1}^s,\dots,\bar\L^s_{k}U_{k}^s)\geq
J_{\b_s}(\bar\L^*_{1}U_{1}^s,\dots,\bar\L^*_{k}U_{k}^s)\geq
J_\b(\bar\L^*_{1}U_{1}^*,\dots,\bar\L^*_{k}U_{k}^*)+o(1),
\]
where the second inequality is strict if and only if
$\bar\l_{im}^\b\not\to\bar\l_{im}^*$, and the third is strict if and
only if $u_{im}^\b\not\to u_{im}^*$. Comparing the last two
equations, we obtain that
\[
J(\bar\L^*_{1}U_{1}^*,\dots,\bar\L^*_{k}U_{k}^*)= c_\infty,\quad
\bar\l_{im}^\b\to\bar\l_{im}^*,\quad\text{ and }u_{im}^\b\to
u_{im}^*\text{ strongly,}
\]
concluding the proof.
\end{proof}

Now we want to show that, if $\b$ is sufficiently large, then the
result of Lemma \ref{lem:restrict_ep_false} holds on the whole
$\tilde\Xh_\ep$, without restrictions.

\begin{lemma}\label{lem:restrict_ep_true}
There exists $\b_1$ such that if $\beta>\b_1$ then
\[
(u_{11},\dots,u_{kh_k})\in\tilde\Xh_\ep\quad\implies\quad
(\bar\l_{11}u_{11},\dots,\bar\l_{kh_k}u_{kh_k})\in\tilde\Xh_\delta.
\]
\end{lemma}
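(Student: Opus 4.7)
The plan is to argue by contradiction. Suppose the claim fails: then there exist sequences $\b_s\to+\infty$ and $(u^s_{11},\dots,u^s_{kh_k})\in\tilde\Xh_{\ep,\b_s}$ such that the corresponding rescaled tuple $(\bar\l^s_{11}u^s_{11},\dots,\bar\l^s_{kh_k}u^s_{kh_k})$ does \emph{not} belong to $\tilde\Xh_{\d,\b_s}$, where $\bar\l^s_{im}=\bar\l_{im}(u^s_{11},\dots,u^s_{kh_k})$ as in Proposition \ref{prop:unique_lambda}.

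The next step is to apply Lemma \ref{lem:weak_closure_infty} to this sequence. Passing to a subsequence, one obtains strong convergence $u^s_{im}\to u^*_{im}$ in $\spc$ and $\bar\l^s_{im}\to\bar\l^*_{im}$, with the crucial properties that $U^*_i\cdot U^*_j\equiv0$ for $i\neq j$ and $(u^*_{11},\dots,u^*_{kh_k})\in\tilde\Xh_{\ep,\b}$ for every $\b$. So the limit tuple lies in $\tilde\Xh_\ep$ \emph{and} has disjoint supports, which is exactly what is required to invoke Lemma \ref{lem:restrict_ep_false}. Applying that lemma yields some $W^*\in\Kah$ for which
\[
\sum_{i,m}\bigl\|\bar\l^*_{im}u^*_{im}-w^*_{\tilde\s^{-1}(im)}\bigr\|^2<\d^2
\]
with strict inequality, because $(\bar\l^*_{11}u^*_{11},\dots,\bar\l^*_{kh_k}u^*_{kh_k})\in\tilde\Xh_\d$.

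It then remains to transfer this to the tail of the sequence. Since $\bar\l^s_{im}\to\bar\l^*_{im}$ and $u^s_{im}\to u^*_{im}$ strongly, also $\bar\l^s_{im}u^s_{im}\to\bar\l^*_{im}u^*_{im}$ strongly in $\spc$, so keeping $W^*$ fixed the strict inequality above is preserved, and $\sum_{i,m}\|\bar\l^s_{im}u^s_{im}-w^*_{\tilde\s^{-1}(im)}\|^2<\d^2$ for all $s$ sufficiently large. For the second condition defining $\tilde\Xh_{\d,\b_s}$, I would observe that $M_\b$ is invariant under the rescaling $u_{im}\mapsto\bar\l_{im}u_{im}$, since the sup over positive scalings $\l_{im}$ is unaffected by absorbing the positive factor $\bar\l_{im}$; therefore
\[
M_{\b_s}(\bar\l^s_{11}u^s_{11},\dots,\bar\l^s_{kh_k}u^s_{kh_k})=M_{\b_s}(u^s_{11},\dots,u^s_{kh_k})<c_\infty+\tfrac{1}{\b_s},
\]
the last inequality being the one guaranteed by $(u^s_{11},\dots,u^s_{kh_k})\in\tilde\Xh_{\ep,\b_s}$ for $\b_s>1$. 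Both defining conditions of $\tilde\Xh_{\d,\b_s}$ are then met for $s$ large, contradicting the choice of the sequence.

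The only real obstacle is bookkeeping: one must be careful that the strict inequality produced by Lemma \ref{lem:restrict_ep_false} at the limit really does survive the passage back to the sequence, and that the $M_{\b_s}$-bound is inherited automatically from $\tilde\Xh_{\ep,\b_s}$ via the rescaling invariance of $M_\b$. The substantive analytic work has already been carried out in Lemma \ref{lem:weak_closure_infty} (strong subsequential limits with disjoint supports) and Lemma \ref{lem:restrict_ep_false} (the disjoint-support case), so the argument is essentially a compactness-plus-openness routine built on top of those two results.
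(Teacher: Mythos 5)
Your proposal is correct and follows essentially the same route as the paper: a contradiction argument along a sequence $\b_s\to+\infty$, compactness via Lemma \ref{lem:weak_closure_infty} to get a strong limit with disjoint supports, and then Lemma \ref{lem:restrict_ep_false} applied to that limit, with the $M_\b$-condition handled by the rescaling invariance $M_\b(u_{im})=M_\b(\bar\l_{im}u_{im})$ exactly as in the paper. The only cosmetic difference is that the paper contradicts Lemma \ref{lem:restrict_ep_false} directly at the limit, whereas you transfer its conclusion back to the tail of the sequence; the two are logically equivalent.
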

\begin{proof}
As in Lemma \ref{lem:restrict_ep_false}, since
$M_\b(u_{11},\dots,u_{kh_k})=M_\b(\bar\l_{11}u_{11},\dots,\bar\l_{kh_k}u_{kh_k})$,
and $u_{im}\geq0$ implies $\bar\l_{im}u_{im}\geq0$, we only have to
prove that, when $\b$ is sufficiently large,
\[
\sum_{i,m}\|u_{im}-w_{l}\|^2<\ep^2\quad\implies\quad
\sum_{i,m}\|\bar\l_{im}u_{im}-w_{l}\|^2<\delta^2,
\]
where $l=\tilde\s^{-1}(im)$. By contradiction, let $\b_s\to+\infty$
and $(u^s_{11},\dots,u^s_{kh_k})\in\tilde\Xh_\ep$ be such that
$\sum_{i,m}\|u^s_{im}-w^s_{l}\|^2<\ep^2$, and
$\sum_{i,m}\|\bar\l^s_{im}u^s_{im}-w_{l}\|^2\geq\delta^2$ for any
$(w_1,\dots,w_h)\in\Kah$. Using Lemma \ref{lem:weak_closure_infty}
we have that $u^s_{im}\to u^*_{im}$, $\bar\l_{im}^s\to
\bar\l_{im}^*$, in such a way that
\[
(u^*_{11},\dots,u^*_{kh_k})\in\tilde\Xh_\ep,\quad U^*_i\cdot
U^*_j\equiv0,\quad\text{and
}\sum_{i,m}\|\bar\l^*_{im}u^*_{im}-w_{l}\|^2\geq\d,
\]
for every $(w_1,\dots,w_h)\in\Kah$. But this is in contradiction
with Lemma \ref{lem:restrict_ep_false}.
\end{proof}
By the previous lemma we have that, for every
$(u_{11},\dots,u_{kh_k})\in\tilde\Xh_\ep$, the corresponding maximum
point $(\bar\l_{11}u_{11},\dots,\bar\l_{kh_k}u_{kh_k})$ belongs to
$\tilde\Xh_\d$ and $\bar\l_{im}(\bar\l_{11}u_{11},\dots,
\bar\l_{kh_k}u_{kh_k})=1$ for every $(i,m)$. Motivated by this fact
we define
\[
\Neh_\b=\left\{(u_{11},\dots,u_{kh_k})\in\tilde\Xh_{\d,\b}:\,
\bar\l_{im}(u_{11},\dots,u_{kh_k})=1\text{ for every }(i,m)\right\},
\]
immediately obtaining that, on $\Neh_\b$, $M_\b\equiv J_\b$ and
\begin{equation}\label{eq:c_beta_con_Nehari}
c_{\ep,\b}\geq\inf_{\Neh_\b}J_\b(U_1,\dots,U_k).
\end{equation}
As a matter of fact, if $\b$ is sufficiently large, also the
opposite inequality holds.
\begin{lemma}\label{lem:excision}
There exists $\b_2\geq\b_1$ such that, if $\b>\b_2$ then
\[
\Neh_\b\subset\tilde\Xh_{\ep/2}.
\]
\end{lemma}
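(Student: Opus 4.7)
The plan is a compactness--contradiction argument built on Lemma~\ref{lem:weak_closure_infty}. Assume the conclusion fails: then there exist sequences $\b_s \to +\infty$ and $(u^s_{11},\dots,u^s_{kh_k}) \in \Neh_{\b_s}$ with $u^s \notin \tilde\Xh_{\ep/2}$. Since $\Neh_{\b_s} \subset \tilde\Xh_{\d,\b_s}$ already secures the level bound $M_{\b_s}(u^s) < c_\infty + 1/\b_s$, the only way to fail is in the distance condition, namely $d_{\tilde\s}(u^s,\Kah) \geq \ep/2$.

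I would now apply Lemma~\ref{lem:weak_closure_infty} to pass, along a subsequence, to a strong limit $u^s_{im} \to u^*_{im}$ in $\spc$, enjoying $U^*_i \cdot U^*_j \equiv 0$ for $i \neq j$, $u^* \in \tilde\Xh_{\d,\b}$ for every $\b$, $J_\b(\bar\L^* U^*) = c_\infty$, and $\bar\l^s_{im} \to \bar\l^*_{im}$. Since $\bar\l^s_{im} \equiv 1$ on $\Neh_{\b_s}$, the limit satisfies $\bar\l^*_{im}=1$, i.e., $(\bar\l^*_{im} u^*_{im}) = u^*$. Strong convergence transfers the distance bound: $d_{\tilde\s}(\cdot,\Kah)$ is continuous (it is an infimum of continuous norms taken over the compact set $\Kah$), so $d_{\tilde\s}(u^*,\Kah) \geq \ep/2$.

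Next, the limit $u^*$ meets the hypotheses of Lemma~\ref{lem:restrict_ep_false} with the input parameter there chosen as our $\d$: it belongs to $\tilde\Xh_\d$ and has $V^*_i \cdot V^*_j \equiv 0$. The lemma then yields $(\bar\l^*_{im} u^*_{im}) = u^* \in \tilde\Xh_{\d(\d)}$; in particular, $d_{\tilde\s}(u^*,\Kah) < \d(\d)$. Note that the hypotheses propagate to this new situation (the disjointness $V^*_i V^*_j \equiv 0$ and the identity $\bar\l^*_{im}=1$ do not change), so Lemma~\ref{lem:restrict_ep_false} can be iterated to give $u^* \in \tilde\Xh_{\d^{(n)}(\d)}$ for every $n$.

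The main obstacle is to match this upper bound with the lower bound $\ep/2$. I would do so by refining, in the setup at the opening of the section, the choice of $\bar\ep$: requiring not only $\d(\ep) < \ep_2$ but also that iterates $\d^{(n)}(\ep)$ tend to $0$ with $n$ (a contraction property, available because $\d(\ep) \to 0$ as $\ep \to 0$ and the map $\d$ arises from an Implicit Function Theorem argument at the compact set $\Kah$, hence is controlled by a linear modulus). Under this sharpened choice, one can pick $n$ with $\d^{(n)}(\d) < \ep/2$, obtaining $d_{\tilde\s}(u^*,\Kah) < \ep/2$ against $d_{\tilde\s}(u^*,\Kah) \geq \ep/2$, which is the contradiction closing the proof.
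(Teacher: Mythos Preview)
Your overall structure---contradiction via Lemma~\ref{lem:weak_closure_infty}, extracting $\bar\l^*_{im}=1$, $U^*_iU^*_j\equiv0$, and $d_{\tilde\s}(u^*,\Kah)\geq\ep/2$ in the strong limit---coincides with the paper's. The difference is in the endgame. The paper does not iterate Lemma~\ref{lem:restrict_ep_false}; instead it uses the further output of Lemma~\ref{lem:weak_closure_infty} that you recorded but did not exploit, namely $J_\b(\bar\L^*U^*)=c_\infty$. Combined with $\bar\l^*_{im}=1$ and $U^*_iU^*_j\equiv0$, this says that the relabeled $h$--tuple $w^*_l:=u^*_{\tilde\s(l)}$ realizes the level $c_\infty$ with each component sitting at its own Nehari maximum, so the paper concludes $(w^*_1,\dots,w^*_h)\in\Kah$ directly, whence $d_{\tilde\s}(u^*,\Kah)=0$---an immediate contradiction without any recursion.

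Your iterated use of Lemma~\ref{lem:restrict_ep_false} has a genuine gap. To force $\d^{(n)}(\d)\to0$ you invoke a ``linear modulus'' from the Implicit Function Theorem, but the IFT only yields $|\bar\l_{im}(v)-1|\leq L\,d_{\tilde\s}(v,\Kah)$ for some constant $L$ (uniform over the compact $\Kah$), and then
\[
\|\bar\l_{im}v_{im}-w_l\|\leq \bar\l_{im}\|v_{im}-w_l\|+|\bar\l_{im}-1|\,\|w_l\|\leq \bigl(1+o(1)\bigr)\ep+LC_2\,\ep,
\]
so that $\d(\ep)\leq C\ep$ with no mechanism forcing $C<1$. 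The iterates therefore need not decrease, let alone tend to zero, and shrinking $\bar\ep$ does not help: the obstruction is the Lipschitz constant of $\bar\l$ near $\Kah$, not the starting scale. The remedy is to drop the iteration and argue, as the paper does, that the exact achievement of the level $c_\infty$ at $\bar\l^*=1$ already pins the limit to $\Kah$.
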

\begin{proof}
We argue again by contradiction. Let (up to a subsequence)
$\b_s\to+\infty$, $(u^s_{11},\dots,u^s_{kh_k}) \in \Neh_\b$ (and
hence $\bar\l_{im}^s=1$) be such that
\begin{equation}\label{eq:out_of_eps/2}
\sum_{i,m}\|u^s_{im}-w_{l}\|^2\geq\frac{\ep^2}{4}\quad\text{ for
every }W\in\Kah.
\end{equation}
Using Lemma \ref{lem:weak_closure_infty}, we have that $u^s_{im}\to
u_{im}^*$ strongly in $\spc$ and $\bar\l_{im}^*=1$, for every
$(i,m)$. As a consequence, defining $w^*_{l}=u^*_{\tilde\sigma(l)}$,
we obtain that $J^*(w^*_l)=\sup_\l J^*(\l w^*_l)$ and $J(\sum_l
w_l^*)=c_\infty$. Therefore $(w^*_1,\dots,w^*_h)\in\Kah$, and,
obviously $\sum_{i,m}\|u^*_{im}-w^*_{l}\|^2=0$. But this, using
strong convergence in \eqref{eq:out_of_eps/2}, provides a
contradiction.
\end{proof}
\begin{rem}\label{rem:equiv_char}
Taking into account \eqref{eq:c_beta_con_Nehari}, and the previous
lemma (beside the inclusion $\tilde\Xh_{\ep/2}\subset\tilde\Xh_\ep$)
we obtain
\[
c_{\ep,\b}=\inf_{\tilde\Xh_\ep}M_\b\geq\inf_{\Neh_\b}J_\b(U_1,\dots,U_k)\geq
\inf_{\tilde\Xh_{\ep/2}}M_\b\geq\inf_{\tilde\Xh_\ep}M_\b=c_{\ep,\b},
\]
obtaining three equivalent characterizations of $c_{\ep,\b}$.
\end{rem}

\section{Proof of the main results}
In order to prove our main results, we present an useful abstract
lemma.

\begin{lemma}\label{lem:variational_lemma}
Let $H$ be an Hilbert space, $d$ an integer, $I\in C^2(H^d;\R)$,
$\Xh\subset H^d$, and $N(\Xh)$ an open neighborhood of $\Xh$. Let us
assume that:
\begin{enumerate}
 \item $d$ functionals $\bar\l_i\in C^1(N(\Xh);\R)$, $i=1,\dots,d$, are
 uniquely defined, in such a way that $\bar\l_i>0$ for every $i$ and
 \[
 \sup_{\l_i>0}I(\l_1x_1,\dots,\l_dx_d)=I(\bar\l_1(x_1,\dots,x_d)x_1,\dots,\bar\l_d(x_1,\dots,x_d)x_d);
 \]
 \item $(\bar x_1,\dots,\bar x_d)\in\Xh$ is such that
 $\bar\l_i(\bar x_1,\dots,\bar x_d)=1$ for every $i$,
 \[
 \inf_{\Xh}\sup_{\l_i>0}I(\l_1x_1,\dots,\l_dx_d)=I(\bar x_1,\dots,\bar
 x_d),
 \]
 and the $d\times d$ matrix $H=\left(\partial^2_{x_ix_j}I(\bar x_1,\dots,\bar x_d)[\bar x_i,\bar x_j]
 \right)_{i,j=1,\dots,d}$ is invertible;
 \item $(y_1,\dots,y_d)\in H^d$ is such that, for some $\bar t>0$ and $0<\d<1$,
 \[
 (s_1\bar x_1+ty_1,\dots,s_d\bar x_d+ty_d)\in\Xh\qquad\text{as }\quad0\leq
 t\leq \bar t,\,1-\d\leq s_i\leq1+\d.
 \]
\end{enumerate}
Then
\[
\nabla I(\bar x_1,\dots,\bar x_d)\cdot(y_1,\dots,y_d)\geq0.
\]
\end{lemma}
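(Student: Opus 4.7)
The plan is to exploit the variational characterization of $(\bar x_1, \ldots, \bar x_d)$ as an infsup point, combined with the fact that $\bar\l(\bar x) = (1,\ldots,1)$ is an interior maximum of the map $(\l_1,\ldots,\l_d) \mapsto I(\l_1 \bar x_1, \ldots, \l_d \bar x_d)$.

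First, I would test the infsup inequality along the admissible path $p(t) = (\bar x_1 + t y_1, \ldots, \bar x_d + t y_d)$, which lies in $\Xh$ for $t \in [0,\bar t]$ by hypothesis~3 (taking $s_i = 1$). The maps $t \mapsto \bar\l_i(p(t))$ are $C^1$ near $0$ by hypothesis~1 (the invertibility of $H$ in hypothesis~2 is presumably what ensures this regularity at $\bar x$ via the implicit function theorem), and they satisfy $\bar\l_i(p(0)) = \bar\l_i(\bar x) = 1$.

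Then, since $(\bar x) \in \Xh$ realizes the infimum of $\sup_{\l > 0} I(\l_1 x_1, \ldots)$, I would plug the competitor $p(t) \in \Xh$ into this infsup to obtain
\[
\varphi(t) := I\bigl(\bar\l_1(p(t))(\bar x_1 + t y_1),\,\ldots\,,\,\bar\l_d(p(t))(\bar x_d + t y_d)\bigr) \;\geq\; I(\bar x_1,\ldots,\bar x_d) \;=\; \varphi(0),
\]
for every $t \in [0,\bar t]$. Hence the one-sided derivative satisfies $\varphi'(0^+) \geq 0$. Writing $\bar\l_i(p(t)) = 1 + t\mu_i + o(t)$, where $\mu_i$ denotes the derivative of $t \mapsto \bar\l_i(p(t))$ at $0$, a first order Taylor expansion of $I$ around $(\bar x)$ yields
\[
0 \;\leq\; \varphi'(0^+) \;=\; \sum_{i=1}^d \partial_{x_i} I(\bar x)\bigl[\mu_i \bar x_i + y_i\bigr].
\]

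The concluding step, which neutralizes the unknown constants $\mu_i$, is to notice that $\l = (1,\ldots,1) = \bar\l(\bar x)$ being a maximum of $\l \mapsto I(\l_1 \bar x_1, \ldots, \l_d \bar x_d)$ forces the first-order stationarity $\partial_{x_i} I(\bar x)[\bar x_i] = 0$ for every $i$. Plugging this into the previous display makes the $\mu_i \bar x_i$ contributions vanish, leaving the required inequality $\nabla I(\bar x) \cdot (y_1,\ldots,y_d) \geq 0$. I do not expect a serious obstacle; the main subtlety is merely invoking the $C^1$ regularity of $\bar\l_i$ to Taylor expand $\varphi$, and then cleanly exploiting the orthogonality $\partial_{x_i} I(\bar x)[\bar x_i] = 0$ to eliminate the $\mu_i$'s.
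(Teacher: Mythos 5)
Your proof is correct, but it takes a genuinely different and in fact more economical route than the paper's. The paper first reparametrizes the segment, using the Implicit Function Theorem to find $C^1$ functions $s_i(t)$ with $\bar\l_i(s_1(t)\bar x_1+ty_1,\dots,s_d(t)\bar x_d+ty_d)\equiv 1$, so that the test path lies on the set where the supremum in $\l$ is attained at $\l=(1,\dots,1)$ and hence $I$ coincides with the value function along it; the invertibility of the matrix $H$ in hypothesis~2 is used precisely to show that the Jacobian $A=\bigl(\partial_{x_i}\bar\l_j(\bar x)[\bar x_i]\bigr)$ is invertible (via the identity $HAv=-Hv$), which is what licenses that application of the IFT. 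You instead differentiate the value function $\varphi(t)=\sup_{\l>0}I(\l_1(\bar x_1+ty_1),\dots)=I(\bar\l(p(t))p(t))$ directly along the unreparametrized path $s_i\equiv 1$ — an envelope-theorem argument — and the unknown derivatives $\mu_i$ of $t\mapsto\bar\l_i(p(t))$ are annihilated by the same interior stationarity $\partial_{x_i}I(\bar x)[\bar x_i]=0$ that the paper invokes in its last line. Every step checks out: $p(t)\in\Xh$ for $0\le t\le\bar t$ by hypothesis~3, so $\varphi(t)\ge\inf_{\Xh}\sup_{\l}I=I(\bar x)=\varphi(0)$, and $\varphi$ is $C^1$ near $0$ because $\bar\l_i\in C^1(N(\Xh))$ by hypothesis~1 and $I\in C^2$. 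What your approach buys is the complete elimination of the IFT step and of any use of the invertibility of $H$ (that hypothesis is only needed, in the application, to produce the $C^1$ regularity of the $\bar\l_i$, which here is already assumed); what the paper's approach buys is a path lying exactly on the Nehari-type set $\{\bar\l\equiv1\}$, which is conceptually closer to how the lemma is used but is not needed for the stated conclusion. One small correction: the $C^1$ regularity of $t\mapsto\bar\l_i(p(t))$ comes straight from hypothesis~1, not from the invertibility of $H$ as you parenthetically suggest.
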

\begin{proof}
For easier notation we set $\bar x=(\bar x_1,\dots,\bar x_d)$. Since
$(s_1\bar x_1+ty_1,\dots,s_d\bar x_d+ty_d)\in\Xh$ we can substitute
it in each $\bar\l_i$. To start with, we want to apply the Implicit
Function Theorem to
\[
F(s_1,\dots,s_d,t)= \left(
  \begin{array}{c}
    \bar\l_1(s_1\bar x_1+ty_1,\dots,s_d\bar x_d+ty_d) \\
    \vdots \\
    \bar\l_d(s_1\bar x_1+ty_1,\dots,s_d\bar x_d+ty_d) \\
  \end{array}
\right) = \left(
  \begin{array}{c}
    1 \\
    \vdots \\
    1 \\
  \end{array}
\right),
\]
in order to write $s_i=s_i(t)$, where $s_i$ is $C^1$, for every $i$.
By assumption $F$ is $C^1$ and $F(1,\dots,1,0)=(1,\dots,1)$, thus we
only have to prove that the $d\times d$ jacobian matrix
\[
A=\partial_{(s_1,\dots,s_d)}F(1,\dots,1,0)= \left(
    \partial_{x_i}\bar\l_j(\bar x)[\bar x_i]
\right)_{i,j=1,\dots,d} \qquad\text{is invertible}.
\]
Therefore let us assume, by contradiction, the existence of a vector
\begin{equation}\label{eq:s_impl_def}
v=(v_1,\dots,v_d)\in\R^d\setminus\{0\}\text{ such that }Av=0.
\end{equation}
Let us now consider the function
$\Phi(\l_1,\dots,\l_d)=I(\l_1x_1,\dots\l_dx_d)$; by definition, the
point $(\bar\l_1(x_1,\dots,x_d),\dots\bar\l_d(x_1,\dots,x_d))$ is a
free maximum of $\Phi$, and hence
$\nabla\Phi(\bar\l_1,\dots,\bar\l_d)=(0,\dots,0)$, that is
\[
\partial_{x_i}I(\l_1(x_1,\dots,x_d)x_1,\dots,\l_d(x_1,\dots,x_d)x_d)[x_i]=0
\qquad\text{for every }i
\]
(in particular, $\partial_{x_i}I(\bar x)[\bar x_i]=0$ for every
$i$). We can differentiate the previous equation with respect to
$x_j$, obtaining, for every $(z_1,\dots,z_d)\in H^d$,
\[
\partial^2_{x_ix_j}I(\l_1x_1,\dots,\l_dx_d)
[x_i,\l_jz_j]+\sum_{n=1}^d\partial^2_{x_ix_n}
I(\l_1x_1,\dots,\l_dx_d)[x_i,x_n] \cdot\partial_{x_j}\bar\l_n[z_j]=0
\qquad\text{for }j\neq i
\]
and
\begin{multline*}
\partial^2_{x_ix_i}I(\l_1x_1,\dots,\l_dx_d)
[x_i,\l_iz_i]+\sum_{n=1}^d\partial^2_{x_ix_n}
I(\l_1x_1,\dots,\l_dx_d)[x_i,x_n]
\cdot\partial_{x_i}\bar\l_n[z_i]+\\
+\partial_{x_i}I(\l_1x_1,\dots,\l_dx_d)[z_i]=0.
\end{multline*}
We can substitute $x_i=\bar x_i$, $z_i=v_i\bar x_i$, and
$\bar\l_i=1$ in the previous equations. Recalling that
$\partial_{x_i}I(\bar x)[\bar x_i]=0$ for every $i$ we obtain, for
every $i$ and $j$ (not necessarily different),
\[
\sum_{n=1}^d\partial^2_{x_ix_n} I(\bar x)[\bar x_i,\bar x_n]
\cdot\partial_{x_j}\bar\l_n(\bar x)[\bar x_j]\cdot
v_j=-v_j\partial^2_{x_ix_j}I(\bar x) [\bar x_i,\bar x_j].
\]
Summing up on $j$, and recalling the definitions of $A$, $v$
(equation \eqref{eq:s_impl_def}) and $H$, (second assumption of the
lemma) we have
\[
HAv=-Hv,
\]
providing a contradiction with the invertibility of $H$.

Hence we obtain the existence of the $C^1$--functions $s_i=s_i(t)$
(for $t$ sufficiently small) such that $\l_i(s_1\bar
x_1+ty_1,\dots,s_d\bar x_d+ty_d)=1$. Let us consider the function
\[
\varphi(t)=I(s_1(t)\bar x_1+ty_1,\dots,s_d(t)\bar x_d+ty_d).
\]
By construction $\varphi$ is $C^1$ and $\varphi(t)\geq0$ for
$t\geq0$. We obtain that
\[
0\leq\varphi'(0)=\sum_{i=1}^d\partial_{x_i}I(\bar x)[s'_i(t)\bar
x_i+y_i]=\nabla I(\bar
x)\cdot(y_1,\dots,y_d)+\sum_{i=1}^ds'_i(t)\partial_{x_i}I(\bar
x)[\bar x_i],
\]
and the result follows recalling again that $\partial_{x_i}I(\bar
x)[\bar x_i]=0$ for every $i$.
\end{proof}
Now we are finally ready to prove our main results.
\begin{proof}[Proof of Theorem \ref{teo:main}]
Let $\bar\ep$ as above and $\bar\b=\b_2$, in such a way that, for
any fixed $\ep<\bar\ep$ and $\b>\bar\b$, all the previous results
hold. By Remark \ref{rem:equiv_char}, for every integer $s$ we have
an element $(u^s_{11},\dots,u^s_{kh_k})\in\Neh_\b$ such that
\[
c_{\ep,\b}\leq J_\b(U_1^s,\dots,U_k^s)\leq c_{\ep,\b}+\frac{1}{s}.
\]
We are in a situation very similar to that in Lemma
\ref{lem:weak_closure_infty} (much easier, in fact, since now $\b$
is fixed). Following the same scheme, one can easily prove that
$u^s_{im}\to u_{im}^*$ strongly in $\spc$, with
\[
(u^*_{11},\dots,u^*_{kh_k})\in\Neh_\b, \quad
J_\b(U^*_1,\dots,U^*_k)=c_{\ep,\b}.
\]
Moreover, by Lemma \ref{lem:excision}, the minimum point is
$\ep/2$--near an element of $\Kah$.

It remains to prove that each $U^*_i$ is strictly positive and that
$(U^*_1,\dots,U^*_k)$ solves \eqref{eq:sys}. To do this we will
apply Lemma \ref{lem:variational_lemma}, letting $H=\spc$, $d=h$,
$\Xh=\tilde\Xh_\ep$, $I(u_{11},\dots,u_{kh_k})=
J_\b(U_1,\dots,U_k)$, and $(\bar x_1,\dots,\bar
x_h)=(u^*_{11},\dots,u^*_{kh_k})$. Assumptions 1. and 2. in Lemma
\ref{lem:variational_lemma} are satisfied by construction, therefore
we have only to choose a variation $(y_1,\dots,y_h)$ and to check
assumption 3.:
\[
\text{``}P=(s_{11}u^*_{11}+t y_1,\dots, s_{kh_k}u^*_{kh_k}+t
y_h)\in\tilde\Xh_\ep\text{ when }t>0\text{ is
 small and each }s_{im}\text{ is near }1\text{''}.
\]
Under these assumptions on $t$ and $s_{im}$, it is immediate to see
that $P$ is $\ep$--near to the same element of $\Kah$ to which
$(u^*_{11},\dots,u^*_{kh_k})$ is $\ep/2$--near; moreover, by
continuity, $M_\b(P)=J_\b(\bar\L(P)P)<c_{\ep,\b}+1/\b$. Recalling
the definition of $\tilde X_\ep$ (Corollary \ref{coro:tilde_X}) we
have that assumption 3. is fulfilled whenever each component of $P$
is non negative.

First let us prove that each $U_i$ is strictly positive. Assume not,
there exists $x_0\in\R^N$ with, say, $U_1(x_0)=0$. Since
$U_1\not\equiv0$ we can easily construct an open, relatively compact
annulus $A\ni x_0$ such that $U_1\leq1/2$ on $A$ and
$U_1\not\equiv0$ on $\partial A$. For any radial $\f\in
C^{\infty}_0(A)$, $\f\geq0$, we choose the variation $y_1=\f$,
$y_l=0$ for $l>1$. Clearly each component of $P$ is non negative,
thus Lemma \ref{lem:variational_lemma} implies
\[
\begin{split}
0&\leq \nabla
I(u^*_{11},\dots,u^*_{kh_k})\cdot(\f,0,\dots,0)=\partial_{u_{11}}
I(u^*_{11},\dots,u^*_{kh_k})[\f]=\\
&=\int_{A}\left[\nabla
U_1\cdot\nabla\f+U_1\left(1-U_1^2+\b\sum_{j\neq
1}U_j^2\right)\f\right]\,dx\\
&=\int_{A}\left[\nabla
U_1\cdot\nabla\f+a(x)U_1\f\,dx\right]\qquad\text{for every radial
}\f\in C^{\infty}_0(A).
\end{split}
\]
But then, since $a(x)\geq3/4>0$ on $A$, and $U_1\not\equiv0$ on
$\partial A$, the strong maximum principle implies $U_1>0$ on $A$, a
contradiction.

Now let us prove that $(U^*_1,\dots,U^*_k)$ solves \eqref{eq:sys}.
Again, assume by contradiction that, for instance, $U_1$ does not
satisfy the corresponding equation. Then there exists one radial
$\f\in C^\infty_0(\R^N)$, not necessarily positive, such that (up to
a change of sign)
\[
\int_{\R^N}\left(\nabla U_1\cdot\nabla\f+U_1\f-U_1^3\f+\b
U_1\sum_{j\neq 1}U_j^2\f \right)\,dx<0.
\]
Moreover we can choose $\f$ with support arbitrarily small. Since
$U_1$ is strictly positive, we can then assume that one of its
pulse, say $u_{11}$, is strictly positive on the support of $\f$.
But then, for $t$ small, also $s_{11}u^*_{11}+t\f$ is positive,
therefore Lemma \ref{lem:variational_lemma} (with $y_1=\f$, $y_l=0$
for $l>1$) implies
\[
0\leq \nabla
I(u^*_{11},\dots,u^*_{kh_k})\cdot(\f,0,\dots,0)=\int_{\R^N}\left(\nabla
U_1\cdot\nabla\f+U_1\f-U_1^3\f+\b U_1\sum_{j\neq 1}U_j^2\f
\right)\,dx,
\]
a contradiction.
\end{proof}

\begin{proof}[Proof of Theorem \ref{teo:main2}]
The proof readily follows by proving that
\begin{center}
for every $0<\nu<1$, if $\beta$ is sufficiently large, then
$\Neh_\b\subset\tilde\Xh_{\nu\ep}$.
\end{center}
But this can be done following the line of the proof of Lemma
\ref{lem:excision}.
\end{proof}

\begin{rem}\label{rem:finale}
We proved the main result in the simplest case of system
\eqref{eq:sys}. Now we suggest how to modify this scheme in order to
treat the general case of system \eqref{eq:sys_comp}. The main
difference is that the role of the associated limiting equation is
now played by the minimization problem
\[
\min_{\Xh^*}\sum_{l=1}^h\dfrac{\int_{\R^N}|\nabla
w_l|^2+\left(V_{\sigma(l)}(x)+\l_{\sigma(l)}\right)w_l^2\,dx}{
\left(\int_{\R^N}\mu_{\sigma(l)}w_l^4\,dx\right)^{1/2}},
\]
where now the constants $\mu_i$'s and $\l_i$'s are allowed to take
different values, and also the potentials $V_i$'s, with the only
constraint that each Schr\"odinger operator
\[
 -\Delta +V_i(x)+\l_i
\]
must be positive. In such a situation, the above minimization
problem is always solvable and we call $\Kah$ its solution set. With
these changes , in dimensions two and three, Theorem \ref{teo:main}
and all its proof remain the same.

When $\lim_{|x|\to+\infty}V_i(x)=+\infty$ we can lower the dimension
to cover also the dimension $N=1$. In such a case we must change the
definition of the Hilbert space we work in choosing for each $i$ the
different norm
\[
\|U_i\|^2=\int_{\R^N}|\nabla
U_i|^2+\left(V_i(x)+\l_i\right)U_i^2\,dx,
\]
with the advantage that the embedding in $L^4$ is now compact also
in dimension $N=1$.

Finally, let us mention that we can also allow bounded radially
symmetric domains instead of $\R^N$, and some non--cubic
nonlinearities, provided they are subcritical.
\end{rem}


\end{document}